\theoremstyle{plain}
\newtheorem{theorem}{Theorem}
\theoremstyle{definition}
\newtheorem{proposition}{Proposition}[section]
\newtheorem{corollary}[proposition]{Corollary}
\newtheorem{lemma}[proposition]{Lemma}
\newtheorem{definition}[proposition]{Definition}
\newtheorem{claim}[proposition]{Claim}
\newtheorem{question}[proposition]{Question}
\theoremstyle{remark}
\DeclareMathOperator{\acc}{acc}
\DeclareMathOperator{\otp}{otp}
\DeclareMathOperator{\Sk}{Sk}
\DeclareMathOperator{\drop}{Drop}
\DeclareMathOperator{\gap}{Gap}
\DeclareMathOperator{\fil}{Fill}
\DeclareMathOperator{\ran}{ran}
\DeclareMathOperator{\Ch}{Ch}
\DeclareMathOperator{\st}{St}
\newcommand{\sk}{\vskip.05in}
\DeclareMathOperator{\id}{id}
\DeclareMathOperator{\nacc}{nacc}
\newcommand{\restr}{\upharpoonright}
\DeclareMathOperator{\cf}{cf}
\newcommand{\subs}{\subseteq}
\numberwithin{equation}{section}
\begin{document}
\title{Successors of singular cardinals and coloring theorems~{II}}
\author{Todd Eisworth}
\address{Department of Mathematics\\
         Ohio University\\
         Athens, OH 45701}
\email{eisworth@math.ohiou.edu}
\author{Saharon Shelah}
\address{Institute of Mathematics\\
         The Hebrew University of Jerusalem\\
         Jerusalem, Israel\\}
\address{Department of Mathematics\\
         Rutgers University\\
         New Brunswick, NJ}
\email{shelah@math.huji.ac.il}
 \keywords{square-brackets partition relations, minimal walks, successor of singular cardinal}
 \subjclass{03E02}
 \thanks{The first author acknowledges support from NSF grant DMS 0506063. Research of the second author was supported by the United
   States-Israel Binational Science Foundation (Grant no. 2002323)
   The authors' collaboration was supported in part by NSF Grant DMS 0600940. This is paper 819 in the publication
 list of the second author.}
\date{\today}
\begin{abstract}
In this paper, we investigate the extent to which techniques used in~\cite{Sh:365}, \cite{535}, and \cite{nsbpr} --- developed to prove coloring
theorems at successors of singular cardinals of uncountable cofinality --- can
be extended to cover the countable cofinality case.
\end{abstract}
\maketitle
\section{Introduction}

In this paper, we tackle some of the issues left unresolved its predecessor~\cite{535} and the related~\cite{nsbpr}.
In particular, we begin the project of extending the coloring theorems found in those papers to a more
general setting --- a setting that will allow us to draw conclusions concerning successors
of singular cardinals of countable cofinality.

We remind the reader that the
 {\em square-brackets partition relation} $\kappa\rightarrow[\lambda]^\mu_\theta$ of
Erd\"{o}s, Hajnal, and Rado \cite{ehr}  asserts that for every function $F:[\kappa]^\mu\rightarrow\theta$
 (where $[\kappa]^\mu$ denotes the subsets of $\kappa$ of cardinality $\mu$),
there is a set $H\subs\kappa$ of cardinality $\lambda$ such that
\begin{equation}
\ran(F\restr[H]^\mu)\neq\theta,
\end{equation}
that is, the function $F$ omits at least one value when we restrict
it to $[H]^\mu$.

This paper investigates the extent to which {\em negations} of square-brackets
partition relations hold at the successor of a singular cardinal.  In particular,
we examine  relatives of the combinatorial statement
\begin{equation}
\label{eqn6}
\lambda\nrightarrow [\lambda]^2_\lambda,
\end{equation}
 where $\lambda$ is the successor of a singular cardinal. Our main concern
 is the situation where $\lambda=\mu^+$ for $\mu$ singular of countable cofinality; in general, we
 already know stronger results for the case where $\lambda$ is the successor of a singular of
 uncountable cofinality.
The added difficulties that arise in the work for this paper are due to some issues involving
club-guessing, and we prove some theorems in that area as well.

We also remark that Chapter~III of~\cite{cardarith} (i.e., \cite{Sh:365}) claims
something stronger than our Theorem~\ref{thm9}, but there is a problem in the proof
given there. More precisely, the comments on page~163 dealing with extending the main theorem
of that chapter to the successor of a singular of countable cofinality (Lemma 4.2(4)) are
not enough to push the proof through.  Theorems~\ref{mainthm} and~\cite{thm9} provide
a partial reclamation of this earlier work of the second author.

We now take a moment to fix our notation and lay out some results underpinning our work. In
particular, we need to discuss scales, elementary submodels, and their interaction.

\begin{definition}
Let $\mu$ be a singular cardinal. A {\em scale for $\mu$} is a pair
$(\vec{\mu},\vec{f})$ satisfying
\begin{enumerate}
\item $\vec{\mu}=\langle\mu_i:i<\cf(\mu)\rangle$ is an increasing sequence of regular cardinals
such that $\sup_{i<\cf(\mu)}\mu_i=\mu$ and $\cf(\mu)<\mu_0$.
\item $\vec{f}=\langle f_\alpha:\alpha<\mu^+\rangle$ is a sequence of functions such that
\begin{enumerate}
\item $f_\alpha\in\prod_{i<\cf(\mu)}\mu_i$.
\item If $\gamma<\delta<\beta$ then $f_\gamma<^* f_\beta$, where  the notation $f<^* g$  means that $\{i<\cf(\mu): g(i)\leq f(i)\}$ is bounded below $\cf(\mu)$.
\item If $f\in\prod_{i<\cf(\mu)}\mu_i$ then there is an $\alpha<\beta$ such that $f<^* f_\alpha$.
\end{enumerate}
\end{enumerate}
\end{definition}

Our conventions regarding elementary submodels are  standard
--- we assume that $\chi$ is a sufficiently large regular cardinal
and let $\mathfrak{A}$ denote the structure $\langle H(\chi),\in,
<_\chi\rangle$ where $H(\chi)$ is the collection of sets
hereditarily of cardinality less than $\chi$, and $<_\chi$ is some
well-order of $H(\chi)$.  The use of $<_\chi$ means that our
structure $\mathfrak{A}$ has definable Skolem functions, and we
obtain the set of {\em Skolem terms} for $\mathfrak{A}$ by closing
the collection of Skolem functions under composition.  With these
Skolem terms in hand, we can discuss Skolem hulls:

\begin{definition}
Let $B\subs H(\chi)$. Then $\Sk_{\mathfrak{A}}(B)$ denotes the
Skolem hull of $B$ in the structure $\mathfrak{A}$. More precisely,
\begin{equation*}
\Sk_{\mathfrak{A}}(B)=\{t(b_0,\dots,b_n):t\text{ a Skolem term for
$\mathfrak{A}$ and }b_0,\dots,b_n\in B\}.
\end{equation*}
\end{definition}

The set $\Sk_{\mathfrak{A}}(B)$ is an elementary substructure of
$\mathfrak{A}$, and it is the smallest such structure containing
every element of $B$.

We also make use of characteristic functions of elementary submodels.

\begin{definition}
\label{chardef} Let $\mu$ be a singular cardinal of cofinality~$\kappa$, and
let $\vec{\mu}=\langle\mu_i:i<\kappa\rangle$ be an increasing
sequence of regular cardinals cofinal in $\mu$.  If $M$ is an
elementary submodel of $\mathfrak{A}$ such that
\begin{itemize}
\item $|M|<\mu$,
\item $\langle \mu_i:i<\kappa\rangle\in M$, and
\item $\kappa+1\subs M$.
\end{itemize}
then the {\em characteristic function of $M$ on $\vec{\mu}$}
(denoted $\Ch^{\vec{\mu}}_M$) is the function with domain $\kappa$
defined by
\begin{equation*}
\Ch^{\vec{\mu}}_M(i):=
\begin{cases}
\sup(M\cap\mu_i) &\text{if $\sup(M\cap\mu_i)<\mu_i$,}\\
0  &\text{otherwise.}
\end{cases}
\end{equation*}
If $\vec{\mu}$ is clear from context, then we suppress reference to
it in the notation.
\end{definition}

In the situation of Definition~\ref{chardef}, it is clear that
$\Ch^{\vec{\mu}}_M$ is an element of the product
 $\prod_{i<\kappa}\mu_i$, and furthermore, $\Ch^{\vec{\mu}}_M(i)=\sup(M\cap\mu_i)$ for all
sufficiently large $i<\kappa$.  The following result is essentially due to Baumgartner~\cite{jb} ---
a proof can be found in the introductory section of~\cite{myhandbook}.

\begin{lemma}
\label{skolemhulllemma} Let $\mu$, $\kappa$, $\vec{\mu}$, and $M$ be
as in Definition~\ref{chardef}. If $i^*<\kappa$ and we define $N$ to
be $\Sk_{\mathfrak{A}}(M\cup\mu_{i^*})$, then
\begin{equation}
\Ch_M\restr [i^*+1,\kappa)=\Ch_N\restr [i^*+1,\kappa).
\end{equation}
\end{lemma}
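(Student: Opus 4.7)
The goal is to show that for each $i$ with $i^* < i < \kappa$, we have $\sup(M\cap\mu_i) = \sup(N\cap\mu_i)$, from which the equality of characteristic functions at $i$ follows immediately (both sides take value $\sup(M\cap\mu_i)$, or both take $0$, depending on whether this sup equals $\mu_i$). The containment $M\subs N$ gives one inequality for free; the real content is bounding $\sup(N\cap\mu_i)$ from above by $\sup(M\cap\mu_i)$.

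\textbf{The key step.} Fix $i$ with $i^* < i < \kappa$ and an element $\alpha\in N\cap\mu_i$. By definition of the Skolem hull, we can write
\[
\alpha = t(a_1,\dots,a_n,\beta_1,\dots,\beta_m)
\]
for some Skolem term $t$, parameters $a_1,\dots,a_n\in M$, and ordinals $\beta_1,\dots,\beta_m<\mu_{i^*}$. Note that $i$, $i^*$, $\mu_i$, $\mu_{i^*}$, and the sequence $\langle\mu_j:j<\kappa\rangle$ all lie in $M$, using $\kappa+1\subs M$ and the hypothesis $\langle\mu_j:j<\kappa\rangle\in M$. Consequently the ordinal
\[
\gamma := \sup\bigl\{t(a_1,\dots,a_n,x_1,\dots,x_m) : (x_1,\dots,x_m)\in\mu_{i^*}^m \text{ and } t(a_1,\dots,a_n,\vec{x})<\mu_i\bigr\}
\]
is definable in $\mathfrak{A}$ from parameters lying in $M$, hence $\gamma\in M$.

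\textbf{Why $\gamma<\mu_i$.} The ordinal $\gamma$ is a supremum of a set of ordinals of size at most $|\mu_{i^*}^m| = \mu_{i^*}$, which is strictly less than $\mu_i$. Since $\mu_i$ is a regular cardinal with $\mu_{i^*}<\mu_i$, this supremum is bounded below $\mu_i$. So $\gamma\in M\cap\mu_i$, and plainly $\alpha\le\gamma$. Thus every element of $N\cap\mu_i$ is bounded by some element of $M\cap\mu_i$, giving $\sup(N\cap\mu_i)\le\sup(M\cap\mu_i)$.

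\textbf{Conclusion.} Combining with the trivial inequality we get $\sup(M\cap\mu_i)=\sup(N\cap\mu_i)$ for all $i>i^*$. Since both definitions of $\Ch_M(i)$ and $\Ch_N(i)$ depend only on this supremum (and whether it equals $\mu_i$), the restrictions of $\Ch_M$ and $\Ch_N$ to $[i^*+1,\kappa)$ coincide. The main conceptual point is the interplay between the regularity of $\mu_i$ and the cardinality of $\mu_{i^*}$; the only mild subtlety to be careful about is making sure that $\mu_i$ and $\mu_{i^*}$ are genuinely in $M$ so that $\gamma$ is definable from parameters in $M$, which is where the assumption $\kappa+1\subs M$ is used.
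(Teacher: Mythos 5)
The paper does not include a proof of this lemma; it cites Baumgartner and the Handbook chapter \cite{myhandbook}. Your argument is correct and is essentially the standard proof one finds in those sources: write $\alpha\in N\cap\mu_i$ as $t(\vec a,\vec\beta)$ with $\vec a\in M$ and $\vec\beta\in\mu_{i^*}^m$, observe that $\mu_i,\mu_{i^*}\in M$ (via $\kappa+1\subs M$ and $\vec\mu\in M$), and bound $\alpha$ by the $M$-definable ordinal $\gamma=\sup\{t(\vec a,\vec x):\vec x\in\mu_{i^*}^m,\ t(\vec a,\vec x)<\mu_i\}$, which lies below $\mu_i$ by regularity of $\mu_i$ together with $\mu_{i^*}<\mu_i$. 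Nothing is missing; this is the intended argument.
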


We need one more easy fact about scales; a proof can be found in~\cite{nsbpr}.
In the statement of the lemma (and throughout the rest of this paper) we use the notation
``$\forall^*$'' to mean ``for all sufficiently large'' and ``$\exists^*$'' to mean ``there are unboundedly
many''.

\begin{lemma}
\label{scalelemma1}
Let $\lambda=\mu^+$ for $\mu$ singular of cofinality $\kappa$, and suppose $(\vec{\mu},\vec{f})$ is a scale
for $\mu$. Then there is a closed unbounded $C\subs\lambda$ such that the following holds for every $\beta\in C$:
\begin{equation}
(\forall^* i<\kappa)(\forall\eta<\mu_i)(\forall\mu<\mu_{i+1})(\exists^*\alpha<\beta)[f_\alpha(i)>\eta\wedge f_\alpha(i+1)>\nu.]
\end{equation}
\end{lemma}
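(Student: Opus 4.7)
The plan is to reduce the lemma to a purely combinatorial unboundedness claim about the scale and then to extract the desired club by intersecting accumulation-point sets. For $i<\kappa$, $\eta<\mu_i$, $\nu<\mu_{i+1}$ put
\[
A_{i,\eta,\nu}:=\{\alpha<\lambda:f_\alpha(i)>\eta\wedge f_\alpha(i+1)>\nu\},
\]
so the conclusion for $\beta$ reads ``$A_{i,\eta,\nu}\cap\beta$ is cofinal in $\beta$ for every admissible $\eta,\nu$ and every sufficiently large $i$''. The strategy avoids elementary submodels entirely; it just exploits the $<^*$-cofinality of $\vec f$ together with the fact that $\cf(\lambda)=\mu^+>\mu$.

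First I would establish a uniform $i_0<\kappa$ such that for every $i\geq i_0$ and every admissible $(\eta,\nu)$ the set $A_{i,\eta,\nu}$ is unbounded in $\lambda$. Suppose not, so that $I:=\{i<\kappa:\exists\eta<\mu_i,\nu<\mu_{i+1}\text{ with }\sup A_{i,\eta,\nu}<\lambda\}$ is cofinal in $\kappa$. For each $i\in I$ fix witnesses $\eta_i<\mu_i$, $\nu_i<\mu_{i+1}$, $\gamma_i<\lambda$ with $A_{i,\eta_i,\nu_i}\subseteq\gamma_i$; set $\eta_i:=\nu_i:=0$ for $i\notin I$ and $\nu_{-1}:=0$. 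Let $\gamma^*:=\sup_{i\in I}\gamma_i<\lambda$ (using $|I|\leq\kappa<\cf(\lambda)$) and define $h\in\prod_j\mu_j$ by
\[
h(j):=\max\bigl(f_{\gamma^*}(j)+1,\ \eta_j,\ \nu_{j-1}\bigr),
\]
which lies in $\prod_j\mu_j$ because $\eta_j<\mu_j$, $\nu_{j-1}<\mu_{(j-1)+1}=\mu_j$, and each $\mu_j$ is an uncountable regular cardinal. By the cofinality clause of the scale, choose $\alpha^*<\lambda$ with $h<^* f_{\alpha^*}$; since $h(j)>f_{\gamma^*}(j)$ at every $j$, we have $f_{\gamma^*}<^* f_{\alpha^*}$, hence $\alpha^*>\gamma^*$ by the $<^*$-increasing nature of $\vec f$. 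Pick $j_0<\kappa$ with $f_{\alpha^*}(j)>h(j)$ for $j\geq j_0$ and then $i\in I$ with $i\geq j_0$. Then $f_{\alpha^*}(i)>\eta_i$ and $f_{\alpha^*}(i+1)>\nu_i$, so $\alpha^*\in A_{i,\eta_i,\nu_i}\subseteq\gamma_i\leq\gamma^*$, contradicting $\alpha^*>\gamma^*$.

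Once $i_0$ is available, each $A_{i,\eta,\nu}$ with $i\geq i_0$ is unbounded in $\lambda$, and consequently $\acc(A_{i,\eta,\nu})=\{\beta<\lambda:\sup(A_{i,\eta,\nu}\cap\beta)=\beta\}$ is a club in $\lambda$. The number of triples $(i,\eta,\nu)$ under consideration is at most $\sum_{i<\kappa}\mu_{i+1}=\mu<\cf(\lambda)$, so
\[
C:=\bigcap_{i\geq i_0,\ \eta<\mu_i,\ \nu<\mu_{i+1}}\acc(A_{i,\eta,\nu})
\]
is again a club, and any $\beta\in C$ satisfies the conclusion with threshold $i_0$. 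The main obstacle is the construction of $h$ in Step~1: the single function $h$ has to witness $f_{\gamma^*}<^* f_{\alpha^*}$ and simultaneously encode the entire profile $(\eta_i,\nu_i)_{i\in I}$ so tightly that beyond any eventual-domination threshold $j_0$ some $i\in I\cap[j_0,\kappa)$ forces $\alpha^*\in A_{i,\eta_i,\nu_i}$. The placement of $\nu_i$ at coordinate $i+1$ --- legal precisely because $\nu_i<\mu_{i+1}$ --- is exactly what allows the ``$i$'' and ``$i+1$'' demands to be bundled into one $<^*$-increasing requirement on $h$.
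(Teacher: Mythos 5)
Your proof is correct and self-contained. The crux --- if the set $I$ of bad indices were cofinal in $\kappa$, you bundle the witnesses $(\eta_i,\nu_i)_{i\in I}$ together with $f_{\gamma^*}$ into a single $h\in\prod_j\mu_j$, invoke $<^*$-cofinality of $\vec f$ to get $\alpha^*$ with $h<^*f_{\alpha^*}$, deduce $\alpha^*>\gamma^*$ from the fact that a scale is $<^*$-increasing, and then pick $i\in I$ past the domination threshold to force $\alpha^*\in A_{i,\eta_i,\nu_i}\subseteq\gamma_i\leq\gamma^*$ --- is exactly the right use of the scale axioms, and the shift of $\nu_i$ to coordinate $i+1$ (legal precisely because $\nu_i<\mu_{i+1}$) is what lets the two pointwise constraints at $i$ and $i+1$ be compressed into a single $<^*$-requirement. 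The closing move of intersecting the $\leq\mu$ clubs $\acc(A_{i,\eta,\nu})$ is fine since $\mu<\cf(\lambda)=\lambda$. The paper itself defers this proof to \cite{nsbpr}, so a line-by-line comparison is not possible here; your route avoids elementary submodels and characteristic functions entirely, which is at least stylistically leaner than the usual treatment in this literature, though the combinatorial heart (a uniform unboundedness claim extracted from $<^*$-cofinality) must be the same in any proof. Two cosmetic points worth fixing: for limit $j>0$ the symbol $\nu_{j-1}$ is undefined, so state explicitly that the $\nu$-term is omitted (or taken to be $0$) at non-successor coordinates; and your $\acc(A)$ is the set of limit points of $A$ in $\lambda$, not the paper's $\acc$ (which additionally requires $\alpha\in A$) --- the former is what you need, and it is indeed club whenever $A$ is unbounded in $\lambda$, but the clash with the paper's notation should be flagged.
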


\section{Club-Guessing}

In this section we investigate club-guessing.  The coloring theorems presented
in~\cite{Sh:365}, \cite{nsbpr}, and~\cite{535} make use of a particular type of club-guessing sequence. These
special club-guessing sequences are known to exist at successors of singular cardinals of uncountable cofinality
(we give a proof in this section, as the original proof in~\cite{Sh:365} has some minor problems), but it is still
open whether they must exist at successors of singular cardinals of countable cofinality. For this case, the
current section provides club-guessing sequences satisfying weaker conditions, and then in the sequel we
demonstrate that these sequences can be used to obtain similar coloring theorems.  We will begin with some terminology.

\begin{definition}
Let $\lambda$ be a cardinal.
\begin{enumerate}
\item A $C$-sequence for $\lambda$ is a family $\langle C_\alpha:\alpha<\lambda\rangle$ such that $C_\alpha$
is closed and unbounded in $\alpha$ for each $\alpha<\lambda$.
\item If $S$ is a stationary subset of $\lambda$, then an $S$-club system is a family $\langle C_\delta:\delta\in S^*\rangle$
such where
\begin{itemize}
\item $S^*$ is a subset of $S$ such that $S\setminus S^*$ is non-stationary, and
\item $C_\delta$ is closed and unbounded in $\delta$ for each $\delta\in S^*$.
\end{itemize}
\end{enumerate}
\end{definition}

As is clear by the above definition, there is precious little difference between calling
 $\langle e_\alpha:\alpha<\lambda\rangle$ a $C$-sequence and calling it a $\lambda$-club system ---
the two names exist for historical reasons.  The difference in terminology is worth preserving for other reasons,
however, because we will be using these objects in completely different ways --- ``$C$-sequences'' are used exclusively for constructing minimal walks, while ``$\lambda$-club systems'' are used only for club-guessing matters.  Our use of different terms makes it clear how the objects are to be used, and keeps our notation consistent with the extant literature.

The use of the set $S^*$ in the preceding definition is for technical reasons --- very often, we will take
an existing $S$-club system and modify in a way that makes sense only for ``almost all'' elements of $S$, and
we still would like to call the resulting object an $S$-club system.

\begin{definition}
Suppose $C$ is a closed unbounded subset of an ordinal $\delta$. Then
\begin{enumerate}
\item $\acc(C) = \{\alpha\in C: \alpha = \sup(C\cap \alpha)\}$, and

\medskip

\item $\nacc(C) = C\setminus \acc(C)$.
\end{enumerate}
If $\alpha\in\nacc(C)$, then we define $\gap(\alpha, C)$, {\em the gap in $C$ determined by $\alpha$}, by
\begin{equation}
\gap(\alpha, C) = (\sup (C\cap\alpha), \alpha).
\end{equation}
\end{definition}

The next definition captures some standard ideas from proofs of club-guessing; we have chosen more descriptive
names (due to Kojman~\cite{abc}) than those prevalent in~\cite{cardarith}.

\begin{definition}
Suppose $C$ and $E$ are sets of ordinals with $E\cap \sup(C)$ closed in $\sup(C)$.
We define
\begin{equation}
\drop(C, E) = \{\sup(\alpha\cap E):\alpha\in C\setminus\min(E)+1\}.
\end{equation}
Furthermore, if $C$ and $E$ are both subsets of some cardinal $\lambda$ and $\langle e_\alpha:\alpha<\lambda\rangle$
is a $C$-sequence, then for each $\alpha\in\nacc(C)\cap\acc(E)$, we define
\begin{equation}
\fil(\alpha, C, E) = \drop(e_\alpha, E)\cap\gap(\alpha, C).
\end{equation}
\end{definition}

Our notation suppresses the dependence on the parameter $\langle e_\alpha:\alpha<\lambda\rangle$ because
the precise choice of $e_\alpha$ does not make a difference at all; all that matters
is that $\fil(\alpha, C, E)$ provides us with a simple way of generating a closed unbounded
 subset of $E\cap\gap(\alpha, C)$ for $\alpha$ in $\nacc(C)\cap\acc(E)$.

In our first theorem, we characterize the existence of the  special sorts of club-guessing sequences that are crucial
to proofs given in~\cite{nsbpr} and~\cite{535}.

\begin{theorem}
\label{equiv}
Suppose $\lambda=\mu^+$ for $\mu$ a singular cardinal, and let $S$ be a stationary
subset of $\{\delta<\lambda:\cf(\delta)=\cf(\mu)\}$.  Then the following are equivalent:
\begin{enumerate}
\item There is an $S$-club system $\langle C_\delta:\delta\in S\rangle$ such that

\medskip

\begin{enumerate}
\item $|C_\delta|<\mu$ for every $\delta\in S$, and

\medskip

\item for every closed unbounded $E\subs\lambda$, there are stationarily many $\delta$ such that
for all $\tau<\mu$,
\begin{equation}
\label{eqn2.1}
\{\alpha\in\nacc(C_\delta)\cap E : \cf(\alpha)>\tau\}\text{ is unbounded in $\delta$.}
\end{equation}
\end{enumerate}
\item There is an $S$-club system $\langle C_\delta:\delta\in S\rangle$ such that
\begin{enumerate}

\medskip

\item $\sup\{|C_\delta|:\delta\in S\}<\mu$, and

\medskip

\item for every closed unbounded $E\subs\lambda$, there are stationarily many $\delta$ such that
for all $\tau<\mu$,
\begin{equation}
\{\alpha\in\nacc(C_\delta)\cap E : \cf(\alpha)>\tau\}\text{ is unbounded in $\delta$.}
\end{equation}
\end{enumerate}
\item There is an $S$-club system $\langle C_\delta:\delta\in S\rangle$ such that
\begin{enumerate}

\medskip

\item $\otp(C_\delta)=\cf(\delta)$ for every $\delta\in S$,

\medskip

\item $\langle\cf(\alpha):\alpha\in\nacc(C_\delta)\rangle$ is strictly increasing and cofinal in $\mu$, and

\medskip

\item for every closed unbounded $E\subs\lambda$, there are stationarily many $\delta\in S$ with $C_\delta\subs E$.
\end{enumerate}
\end{enumerate}
\end{theorem}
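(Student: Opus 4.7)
I will handle the three implications separately. \textbf{(3)$\Rightarrow$(2):} Under (3), $|C_\delta|=\otp(C_\delta)=\cf(\delta)=\cf(\mu)<\mu$ uniformly in $\delta$, so (2)(a) holds. For (2)(b), fix a club $E$ and take $\delta$ as in (3)(c), so $C_\delta\subs E$; then $\nacc(C_\delta)\cap E=\nacc(C_\delta)$, and (3)(b) says the cofinalities on $\nacc(C_\delta)$ are strictly increasing and cofinal in $\mu$, whence $\{\alpha\in\nacc(C_\delta)\cap E:\cf(\alpha)>\tau\}$ is cofinal in $\delta$ for every $\tau<\mu$. The implication \textbf{(2)$\Rightarrow$(1)} is immediate from the definitions.

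The bulk of the argument is \textbf{(1)$\Rightarrow$(3)}. Set $\kappa=\cf(\mu)$, fix an increasing sequence $\langle\mu_i:i<\kappa\rangle$ of regular cardinals cofinal in $\mu$, and fix a scale $(\vec\mu,\vec f)$ for $\mu$; let $E^*$ be the Lemma~\ref{scalelemma1} club for this scale. Let $\langle C_\delta:\delta\in S\rangle$ witness (1); we may assume $S\subs\acc(E^*)$ and $\cf(\delta)=\kappa$ for all $\delta\in S$. For each $\delta\in S$, I will select a strictly increasing sequence $\langle\alpha^\delta_i:i<\kappa\rangle$ of non-accumulation points of $C_\delta$, cofinal in $\delta$, such that $\cf(\alpha^\delta_i)$ is strictly increasing and cofinal in $\mu$, and then set $C'_\delta:=\cl_\delta\{\alpha^\delta_i:i<\kappa\}$. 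Such a selection is possible pointwise by applying (1)(b) with $E=\lambda$ and choosing cofinalities greedily. Because $\cf(\delta)=\kappa$ and the sequence is cofinal in $\delta$, the closure has order type exactly $\kappa$; since each $\alpha^\delta_i$ strictly exceeds the supremum of its predecessors, the non-accumulation points of $C'_\delta$ are precisely the $\alpha^\delta_i$, so (3)(a) and (3)(b) hold.

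The crux is (3)(c): for every club $E\subs\lambda$, stationarily many $\delta$ must have $C'_\delta\subs E$, equivalently $\alpha^\delta_i\in E$ for all $i<\kappa$ (the accumulation points then lie in $E$ automatically, as $E$ is closed). Since $C'_\delta$ is defined without reference to $E$, the plan is to tether the choice of $\alpha^\delta_i$ to the scale rather than to $E$: at stage $i$, choose $\alpha^\delta_i$ as the least element of $\nacc(C_\delta)$ above a running benchmark whose scale value $f_{\alpha^\delta_i}(i)$ exceeds a threshold definable from $\delta$ and the scale (concretely, computed via the characteristic function of Definition~\ref{chardef}). Given an arbitrary club $E$, pick $M\prec\mathfrak{A}$ with $E,\vec f,\langle C_\delta\rangle\in M$ and $\delta:=\sup(M\cap\lambda)\in S$; by (1) applied to $E\cap E^*$, $\nacc(C_\delta)\cap E$ contains elements of arbitrarily large cofinality cofinal in $\delta$, while Lemma~\ref{scalelemma1} combined with Lemma~\ref{skolemhulllemma} produces, at each stage $i$, a candidate $\alpha\in\nacc(C_\delta)\cap E$ realizing the scale threshold, so elementarity forces $\alpha^\delta_i\in E$. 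Since there is a stationary set of such $M$, (3)(c) follows. The principal obstacle --- and the reason this direction is nontrivial --- is exactly this uniformity: $C'_\delta$ must be defined in advance, yet has to guess every club. Coupling the thinning to the scale instead of to $E$ is the mechanism that converts the raw guessing provided by (1) into the sharper $\otp=\cf(\delta)$ guessing required by (3).
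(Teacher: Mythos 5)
Your decomposition (3)$\Rightarrow$(2)$\Rightarrow$(1) together with (1)$\Rightarrow$(3) is a valid cycle, and the first two implications are fine. The paper instead goes (1)$\Rightarrow$(2)$\Rightarrow$(3)$\Rightarrow$(1), splitting the substance into two separate lemmas: a diagonal-intersection argument for (1)$\Rightarrow$(2) (fixing a uniform cardinality bound $\theta$) and a $\sigma$-step iteration of clubs for (2)$\Rightarrow$(3) (finding a master club $E^*$). Your (1)$\Rightarrow$(3) step must carry all of that weight, and it does not.

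The gap is precisely the circularity you gesture at and then claim to resolve. You define $\alpha^\delta_i$ as the least member of $\nacc(C_\delta)$ past a running benchmark whose scale value exceeds a threshold ``definable from $\delta$ and the scale.'' If that threshold really is independent of $E$ --- as (3)(c) demands, since $C'_\delta$ must be fixed in advance --- then nothing ties $\alpha^\delta_i$ to $E$: the least $\alpha\in\nacc(C_\delta)$ satisfying a scale condition may perfectly well lie in a gap of $E$, even when plenty of other members of $\nacc(C_\delta)\cap E$ also satisfy that condition. Property (1)(b) gives cofinally many suitable candidates \emph{inside} $E$, but ``least'' is not an $E$-aware selection, and the scale cannot be made $E$-aware without building $E$ into the threshold. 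If, on the other hand, the threshold is computed via $\Ch_M$ for a model $M$ with $E\in M$, then $C'_\delta$ depends on $E$ and the ostensible advance definition is abandoned. The appeal to elementarity at the end does not rescue this: $C_\delta\notin M$ (since $\delta\notin M$, only the whole sequence $\langle C_\delta:\delta\in S\rangle$ lies in $M$), so $M$ cannot see which member of $\nacc(C_\delta)$ your rule selects, and $\alpha<\sup(M\cap\lambda)$ never by itself places $\alpha$ in $E$.

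What is actually needed --- and what the paper supplies --- is the master-club step. One first uniformly bounds $|C_\delta|$ below $\mu$ (the diagonal intersection over $\langle\theta_i:i<\cf\mu\rangle$), then fixes a regular $\sigma$ with $\sup_\delta|C_\delta|<\sigma<\mu$ and iterates clubs $E_\zeta$ for $\zeta<\sigma$, using that the decreasing sequences $\langle\sup(E_\zeta\cap\alpha):\zeta<\sigma\rangle$ stabilize before $\sigma$ because $|C_\delta|<\sigma$. The contradiction this yields produces a single club $E^*$ such that for \emph{every} club $E$, stationarily often $\drop(C_\delta,E^*)\subs E$ \emph{and} the guessing from (2)(b) holds at $\delta$. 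Only then can one thin $C_\delta$ down to a $D_\delta\subs\acc(E^*)\cap C_\delta$ of order type $\cf(\delta)$ with increasing cofinalities, knowing $D_\delta\subs\drop(C_\delta,E^*)\subs E$ stationarily often. Your proof has no analogue of this iteration, and no scale argument can substitute for it: (3)(c) is a genuine club-guessing strengthening, and the improvement from ``$\nacc(C_\delta)\cap E$ is cofinal'' to ``$C_\delta\subs E$'' requires exactly the $\sigma$-iteration machinery you omitted.
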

\begin{proof}
Assume $\bar{C}=\langle C_\delta:\delta\in S\rangle$ is as in (1). We claim that there is a $\theta<\mu$ such that
for every closed unbounded $E\subs\lambda$, there are stationarily many $\delta$ such that~(\ref{eqn2.1}) is
satisfied for all $\tau<\mu$ {\bf\em and } $|C_\delta|\leq\theta$.  Suppose this is not the case, and
let $\langle\theta_i:i<\cf(\mu)\rangle$ be an increasing sequence of cardinals cofinal in $\mu$.  For each~$i<\cf(\mu)$,
there is a closed unbounded $E_i\subs\lambda$ such that for all $\delta\in S$, either $|C_\delta|>\theta_i$
or~(\ref{eqn2.1}) fails for some $\tau<\mu$.  The contradiction is immediate upon
consideration of the club $E=\bigcap_{i<\cf(\mu)}E_i$.

Having established the existence of such a $\theta$, we can modify $\bar{C}$ by replacing those $C_\delta$
of cardinality greater than $\theta$ by an arbitrary club (in $\delta$) of order-type~$\cf(\delta)$, and
this gives us an $S$-club system as in~$(2)$.

The journey from (2) to (3) is an application of standard club-guessing ideas. If $E$ is club in $\lambda$, for the purpose of this
proof, let us agree to say {\em $\bar{C}$ guesses $E$ at $\delta$} if~(\ref{eqn2.1}) holds for all $\tau<\mu$.  Our first move
is to establish that if $\bar{C}$ is as in~(2), then there is a closed unbounded $E^*\subs\lambda$ such that
for every closed unbounded $E\subs\lambda$, there are stationarily many $\delta\in S$ where $\bar{C}$ guesses
$\acc(E^*)$ at~$\delta$
{\em\bf and } such that $\drop(C_\delta, E^*)\subs E$.

Suppose this fails. Choose a regular cardinal $\sigma$ such that
\begin{equation*}
\sup\{|C_\delta|:\delta\in S\}<\sigma<\mu.
\end{equation*}
By recursion on $\zeta<\sigma$ we choose clubs $E_\zeta$ of $\lambda$ as follows:

\begin{flushleft}

{\sf Case $\zeta=0$:}  $E_0=\lambda$

\medskip

{\sf Case $\zeta$ limit:}   We let $E_\zeta=\bigcap_{\xi<\zeta}E_\xi$.

\medskip

{\sf Case $\zeta = \xi+1$:}  In this case, by our assumption we know that $E_\xi$ does not enjoy the properties
required of $E^*$.  Thus, there are closed unbounded sets $E_\xi^0$ and $E_\xi^1$ such that
for all $\delta\in E_\xi^0\cap S$, if $\bar{C}$ guesses $\acc(E_\xi)$ at $\delta$, then there is an
 $\alpha\in C_\delta\setminus\min(E_\xi)$ such that $\sup(E_\xi\cap\alpha)\notin E_\xi^1$.  We now define

\begin{equation}
E_\zeta =E_{\xi+1}= \acc(E_\xi)\cap E_\xi^0\cap E_\xi^1
\end{equation}
and the construction continues.
\end{flushleft}

Now let $E=\bigcap_{\zeta<\sigma}E_\zeta$.  It is clear that $E$ is club in $\lambda$, and so by our assumption
 we can find $\delta\in S$ where $\bar{C}$ guesses $E$.  We note that $\delta\in E$, and
 therefore $\delta\in E^0_\zeta$ for all $\zeta<\sigma$.  Furthermore, $\bar{C}$ guesses $\acc(E_\zeta)$ at $\delta$
 for all $\zeta<\sigma$ because $E\subs \acc(E_\zeta)$.
 Our construction forces us to conclude that for each $\zeta<\sigma$, there is an
 $\alpha\in C_\delta\setminus\min(E_\zeta)$ such that $\sup(E_\zeta\cap\alpha)$ is not in  $E^1_\zeta$ (and therefore
 not in  $E_{\zeta+1}$  either).

We now get a contradiction using a well-known argument --- for each $\alpha\in C_\delta$ greater than $\min(E)$, the sequence
$\langle \sup(E_\zeta\cap \alpha):\zeta<\sigma\rangle$ is decreasing, and therefore eventually constant.  Thus, there
are $\gamma_\alpha<\delta$ and $\zeta_\alpha<\sigma$ such that
\begin{equation*}
\zeta_\alpha\leq\zeta<\sigma\Longrightarrow \sup(E_\zeta\cap\alpha)=\gamma_\alpha.
\end{equation*}
Since $|C_\delta|<\sigma$, we know $\zeta^*:=\sup\{\zeta_\alpha:\alpha\in C_\delta\}$ is less than $\sigma$.
We know $\bar{C}$~guesses $\acc(E_{\zeta^*})$ at $\delta$, and so there is an $\alpha\in C_\delta\setminus\min(E_{\zeta^*})$
such that
\begin{equation}
\sup(E_{\zeta^*}\cap\alpha)\notin E_{\zeta^*+1}.
\end{equation}
But $\zeta^*\geq\zeta_\alpha$,  so
\begin{equation}
\sup(E_{\zeta^*}\cap\alpha)=\gamma_\alpha=\sup(E_{\zeta^*+1}\cap \alpha)\in E_{\zeta^*+1},
\end{equation}
and we have our contradiction.

To finish the proof, let us suppose that $E^*$ is the club whose existence was just established. If
$\bar{C}$ guesses $\acc(E^*)$ at $\delta$, then we can easily build a set $D_\delta$ such that
\begin{itemize}

\medskip

\item $D_\delta\subs \acc(E^*)\cap C_\delta$,

\medskip

\item $D_\delta$ is closed and unbounded in $\delta$ with $\otp(D_\delta)=\cf(\delta)$, and

\medskip

\item $\langle\cf(\alpha):\alpha\in\nacc(D_\delta)\rangle$ is strictly increasing and cofinal in~$\mu$.

\end{itemize}
Notice that $D_\delta\subs \drop(C_\delta, E^*)$ for such $\delta$ --- this is the reason for using $\acc(E^*)$.
For all other $\delta\in S$, we can let $D_\delta$ be a subset of $\delta$ satisfying the
last two conditions above.  It is now routine to verify that $\langle D_\delta:\delta\in S\rangle$ is as required.
Since it is clear that (3) implies (1), the theorem has been established.

\end{proof}

Let us agree to call an $S$-club system a {\em nice club-guessing sequence}
if it satisfies (3) of the above theorem --- this is in concordance with notation from~\cite{cardarith},
and it
also fits in with the {\em nice pairs } defined in~\cite{nsbpr}. We will say that $S$ {\em carries a nice club-guessing sequence}
when such can be found.

Our next task is to demonstrate that nice club-guessing sequences exist when we deal with
successors of singular cardinals of uncountable cofinality.  This result actually follows from Claim~2.6
on page~127 of~\cite{cardarith}, but the proof of that claim has some problems.
The proof we give fixes these oversights, and is actually quite a bit simpler.

\begin{theorem}
\label{cleanthm}
If $\lambda=\mu^+$ for $\mu$ a singular cardinal of uncountable cofinality, then
every stationary subset of $\{\delta<\lambda:\cf(\delta)=\cf(\mu)\}$ carries a nice club-guessing
sequence.
\end{theorem}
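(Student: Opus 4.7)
The plan is to derive nice club-guessing on $S$ from Theorem~\ref{equiv} by verifying condition (3): I begin with a classical club-guessing sequence on $S$ and refine it so that the non-accumulation points acquire the prescribed cofinality structure. The reason this refinement has a chance to work only in the uncountable-cofinality case is that $\kappa := \cf(\mu)$ gives us ``room'' to do a $\kappa$-stage construction, whereas in the countable case the analogous diagonalization fails and one is forced into the weaker framework used elsewhere in this paper.

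First, since $\kappa < \mu$ we have $\kappa^+ < \lambda = \mu^+$, so Shelah's standard club-guessing theorem supplies a sequence $\bar{C}^0 = \langle C^0_\delta : \delta \in S\rangle$ with $\otp(C^0_\delta) = \kappa$ and, for every club $E \subseteq \lambda$, the set $\{\delta \in S : C^0_\delta \subseteq E\}$ stationary. Fix an increasing sequence $\vec{\mu} = \langle \mu_i : i < \kappa\rangle$ of regular cardinals cofinal in $\mu$ with $\mu_0 > \kappa$, and for each $\delta \in S$ enumerate $C^0_\delta = \{\gamma^\delta_i : i < \kappa\}$ continuously.

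Next, I would refine $\bar C^0$ into a sequence $\bar D = \langle D_\delta : \delta \in S\rangle$ realizing the cofinality demands of Theorem~\ref{equiv}(3). For each $\delta\in S$ and $i<\kappa$, form the Skolem hull $M^\delta_i := \Sk_{\mathfrak{A}}(\{\bar{C}^0, \vec{\mu}, \gamma^\delta_{i+1}\} \cup \mu_i)$, presented as the increasing union of a continuous $\mu_i$-chain of smaller hulls, and put $\alpha^\delta_i := \sup(M^\delta_i \cap \gamma^\delta_{i+1})$. Then $|M^\delta_i| = \mu_i$, $\cf(\alpha^\delta_i) = \mu_i$, and $\alpha^\delta_i < \gamma^\delta_{i+1}$ provided the gap between $\gamma^\delta_i$ and $\gamma^\delta_{i+1}$ is wide enough to contain a point of cofinality $\mu_i$. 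Let $D_\delta$ be the closure in $\delta$ of $\{\alpha^\delta_i : i < \kappa\}$; by construction $\otp(D_\delta) = \kappa$ and $\langle \cf(\alpha) : \alpha \in \nacc(D_\delta)\rangle$ is strictly increasing and cofinal in $\mu$.

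The main obstacle is then the club-guessing clause: for every club $E \subseteq \lambda$, stationarily many $\delta \in S$ must satisfy $D_\delta \subseteq E$. Given $E$, setting $E' := \acc(E)$ and invoking club-guessing of $\bar C^0$ yields stationarily many $\delta$ with $C^0_\delta \subseteq E'$, so each $\gamma^\delta_{i+1}$ is a limit of points of $E$; but since the Skolem hulls forming the $\alpha^\delta_i$'s do not see $E$, I expect to need a preliminary processing step guaranteeing that the $\alpha^\delta_i$'s land in $E$. The cleanest way is to replace $\bar{C}^0$ by an intermediate sequence in which every gap is already ``thick'' enough to contain, for each $i<\kappa$, a point of cofinality $\mu_i$ inside $E$ --- this can be arranged by a pre-iteration of length $\kappa$ using Lemma~\ref{skolemhulllemma} to stabilize $\Ch^{\vec{\mu}}_{M^\delta_i}$ above the relevant thresholds, together with a Fodor pressing-down to select a uniform witness. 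Pushing this diagonalization through all $i<\kappa$ at once is the delicate part, and it relies essentially on $\kappa = \cf(\mu)$ being uncountable; this is the step whose countable-cofinality analogue fails and forces the weaker version treated in the next sections.
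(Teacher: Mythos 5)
Your plan has a genuine gap at exactly the point you flag as ``the delicate part,'' and it is not a small gap to be filled in by routine work: it is the entire difficulty of the theorem. You aim to verify clause (3) of Theorem~\ref{equiv} directly, i.e., to produce a single $S$-club system $\bar D$ such that for \emph{every} club $E$, stationarily many $\delta$ satisfy $D_\delta\subs E$. But the points $\alpha^\delta_i=\sup(M^\delta_i\cap\gamma^\delta_{i+1})$ are computed once and for all from Skolem hulls that do not mention $E$, so there is no reason for them to lie inside an arbitrary future club $E$ --- as you yourself observe. The proposed remedy (``a pre-iteration of length $\kappa$ using Lemma~\ref{skolemhulllemma} \dots together with a Fodor pressing-down'') cannot work as stated: any pre-processing must be carried out before $E$ is presented, yet whatever single sequence $\bar D$ you fix, an adversary can choose a club $E$ missing each $\alpha^\delta_i$. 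Making the hull construction ``see'' $E$ would require $\bar D$ to depend on $E$, which defeats the purpose. What you would actually need here is a separate club-guessing-style argument, and that is where the real content lives; it is not supplied.

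The paper proves the theorem by a completely different route that deliberately avoids aiming at (3). It instead argues for the much weaker clause (1) of Theorem~\ref{equiv}, which only demands that $\nacc(C_\delta)\cap E$ contain points of arbitrarily high cofinality below $\mu$ (not $C_\delta\subs E$). Assuming no such sequence exists, one runs an $\omega$-length recursion: at stage $n$ the failure of $\bar C^n$ produces witnessing clubs $E_n^0, E_n^1$, one forms $E_{n+1}$, and one repairs $\bar C^n$ into $\bar C^{n+1}$ by the $\drop$/$\fil$ operations. Uncountability of $\cf(\mu)$ enters precisely to guarantee that the auxiliary parameters $\epsilon^*=\sup_n\epsilon^n_\delta$ and $\theta^*=\sup_n|C^n_\delta|$ remain below $\delta$ and $\mu$ respectively after $\omega$ stages; this in turn lets one choose a $\gamma$ avoiding all the $C^n_\delta$ and extract a strictly decreasing sequence $\langle\alpha_n\rangle$ of ordinals --- a contradiction. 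The promotion from (1) to (3) is then an application of the already-established equivalence, whose own proof contains a further club-guessing argument of the pressing-down type you gesture at. So the architecture is: weak property by contradiction plus an $\omega$-recursion, then upgrade via Theorem~\ref{equiv}. Your proposal tries to hit the strong property (3) in one shot, and that is why it stalls.
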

\begin{proof}
Let $S$ be such a stationary set. By our previous work, it suffices to produce an $S$-club
system satisfying~(1) of Theorem~\ref{equiv}.  Assume by way of contradiction that no
such $S$-club system exists.

Let  $\langle C_\delta:\delta\in S\rangle$ be an $S$-club system with $\otp(C_\delta)=\cf(\delta)$,
and let $\bar{e}$ be any $C$-sequence on $\lambda$.

By recursion on $n<\omega$, we will define objects $\langle C_\delta^n:\delta<\omega\rangle$,
 $\langle\tau_\delta^n:\delta\in S\rangle$,
$\langle \epsilon^n_\delta:\delta\in S\rangle$, and $E_n$ such that
\begin{itemize}

\medskip

\item $C_\delta^n$ is closed and unbounded in $\delta$,

\medskip

\item $\tau_\delta^n$ is a regular cardinal less than $\mu$,

\medskip

\item $\epsilon_\delta^n<\delta$, and

\medskip

\item $E_n$ is closed and unbounded in $\lambda$.

\medskip
\end{itemize}
We let $\bar{C}^n$ denote $\langle C_\delta^n:\delta\in S\rangle$, and our initial set up
has $E_0=\lambda$, $\bar{C}^0=\bar{C}$, $\epsilon_\delta^0 = 0$, and $\tau_\delta^0 = 0$.

Suppose we are given $\bar{C}^n$. By our assumption, $\bar{C}^n$ does not satisfy the demands of our
theorem, and so there are clubs $E^0_n$ and $E^1_n$ such that $\bar{C}^n$ fails to guess $E^0_n$ on $E^1_n\cap S$.
This means for any $\delta\in E^1_n\cap S$, there are $\epsilon<\delta$ and  a regular $\tau<\mu$ such that
\begin{equation}
\alpha\in\nacc(C_\delta^n)\cap E^1_0 \Longrightarrow \cf(\alpha)\leq\tau.
\end{equation}
We now define $E_{n+1}= \acc(E_n\cap E_n^0\cap E_n^1)$, define $\epsilon_\delta^{n+1}$ to be the
least such $\epsilon$, and define $\tau_\delta^{n+1}$ to be the least $\tau$ corresponding to $\epsilon^{n+1}_\delta$.

Now that $E_{n+1}$ has been defined, we declare an ordinal $\delta\in S$ to be {\em active at stage $n+1$}
if $\delta\in \acc(E_{n+1})$.  For those $\delta\in S$ that are inactive at stage $n+1$, we do nothing ---
set $C_\delta^{n+1}= C_\delta^n$,
$\tau_\delta^{n+1}=\tau_\delta^n$, and $\epsilon_\delta^{n+1}=\epsilon_\delta^n$.

For the remainder of this construction, we $\delta$ {\em is} active at stage $n+1$.
Let us say that ordinal $\alpha<\delta$ {\em needs attention at stage $n+1$} if
\begin{equation}
\label{eqn22}
\alpha\in \nacc(C_\delta^n)\cap \acc(E_{n+1})\setminus \epsilon_\delta^{n+1}+1.
\end{equation}
Notice that any ordinal requiring attention at this stage is necessarily of cofinality at most $\tau^{n+1}_\delta$.

Our construction $C^{n+1}_\delta$ commences by setting
\begin{equation}
D_\delta^n = \drop(C_\delta, E_{n+1}).
\end{equation}
This set $D_\delta^n$ is still closed and unbounded in $\delta$ since $\delta$ is active, and if
 $\alpha$ needed attention at this stage, then $\alpha=\sup(E_{n+1}\cap\alpha)$ and therefore
\begin{equation}
\label{eqn20}
\alpha\in\nacc(D^n_\delta)\cap \acc(E_{n+1}).
\end{equation}
In particular, the set $\fil(\alpha, D^n_\delta, E_{n+1})$ is defined for any $\alpha$ that needs attention at this stage.

To finish the construction, we define
\begin{equation}
C^{n+1}_\delta = D^n_\delta\cup\{\fil(\alpha, D^n_\delta[E], E_{n+1}): \text{ $\alpha$ needs attention }\}.
\end{equation}
The set $C^{n+1}_\delta$ is clearly unbounded in $\delta$, and it is closed since it was obtained from
$D^n_\delta$ by gluing closed sets into ``gaps'' in $D^n_\delta$.  It remains to see that $|C^{n+1}_\delta|<\mu$,
and this follows by the estimate
\begin{equation}
\left|C^{n+1}_\delta\right|\leq \left|C^n_\delta\right| + \tau_\delta^{n+1}\cdot\left|C^n_\delta\right|.
\end{equation}
Thus, the recursion can continue.

Let $E = \bigcap_{n<\omega} E_n$, and choose $\delta\in S\cap \acc(E)$ such that $\mu$ divides
the order-type of $\delta\cap E$. Since $E\subs\acc(E_n)$ for all $n$, it follows
that $\delta$ is active at all stages of the construction.  Let us define
\begin{equation}
\epsilon^* = \sup\{\epsilon_\delta^n:n<\omega\}+1,
\end{equation}
and
\begin{equation}
\label{eqn21}
\theta^* = \sup\{|C^n_\delta|:n<\omega\}.
\end{equation}
Since $\aleph_0<\cf(\mu)=\cf(\delta)$, we know $\epsilon^*<\delta$ and $\theta^*<\mu$.  Since $\delta\in\acc(E)$
and $\mu$ divides $\otp(E\cap\delta)$,
\begin{equation}
|E\cap \delta\setminus \epsilon^*| = \mu,
\end{equation}
and an appeal to~(\ref{eqn21}) tells us that we can choose an ordinal $\gamma$ such that
\begin{itemize}

\medskip

\item $\gamma\in E$

\medskip

\item $\epsilon^* < \gamma < \delta$, and

\medskip

\item $\gamma\notin\bigcup_{n<\omega}C^n_\delta$.

\medskip
\end{itemize}
Our next move involves consideration of the sequence $\langle \alpha_n:n<\omega\rangle$ of ordinals defined as
\begin{equation}
\alpha_n = \min(C^n_\delta\setminus\gamma).
\end{equation}
We will reach a contradiction by proving that this sequence of ordinals is strictly decreasing.

Note that $\alpha_n$ is necessarily greater than $\gamma$ by our choice of $\gamma$. This means
that $\alpha_n$ is an element of $\nacc(C^n_\delta)$. Moreover,
\begin{equation}
\epsilon^{n+1}_\delta<\epsilon^*\leq \alpha_n.
\end{equation}
Two possibilities now arise --- either $\alpha_n$ needs attention at stage $n+1$, or it does not.
We analyze each of these cases individually.

\medskip

\noindent{\sf Case 1:  $\alpha_n$ does not need attention at stage $n+1$}

\medskip

A glance at~(\ref{eqn22}) establishes that $\alpha_n$ is not an element of $\acc(E_{n+1})$, and
hence if we set $\beta_n  = \sup(\alpha_n\cap E_{n+1})$, then $\beta_n<\alpha_n$.
Now $\gamma\in E\subs E_{n+1}$, and therefore.
\begin{equation}
\gamma\leq\beta_n <\alpha_n.
\end{equation}
The ordinal $\beta_n$ is in $D^n_\delta$ which is itself a subset of $C^{n+1}_\delta$ and so
\begin{equation}
\alpha_{n+1}\leq\beta_n<\alpha_n.
\end{equation}

\medskip

\noindent{\sf Case 2: $\alpha_n$ needs attention at stage $n+1$}

\medskip

In this case, we have seen that $\fil(\alpha_n, D^n_\delta, E_{n+1})$ is closed and unbounded in~$\alpha_n$
and included in $C^{n+1}_\delta$.  Since $\gamma$ must be strictly less than $\alpha_n$, we see
\begin{equation}
\gamma<\alpha_{n+1}\leq\min(\fil(\alpha_n, D^n_\delta, E_{n+1})\setminus\gamma)<\alpha_n
\end{equation}
and again we have $\alpha_{n+1}<\alpha_n$.

\medskip

We now have the desired contradiction, as $\langle \alpha_n:n<\omega\rangle$ allegedly forms a strictly
decreasing sequence of ordinals.
\end{proof}

  We now come to a very natural question that is still open.

\begin{question}
\label{q1}
Suppose $\lambda=\mu^+$ for $\mu$ singular of countable cofinality, and let $S$ be
a stationary subset of $\{\delta<\lambda:\cf(\mu)=\omega\}$. Does $S$ carry a nice club-guessing sequence?
\end{question}

This question is particular relevant for this paper because a positive answer would allow us to strengthen
our results, as well as simplify the proof enormously by using the techniques of~\cite{nsbpr}. A positive
answer follows easily from $\diamondsuit(S)$, but we leave the proof of this to the reader. The next
theorem explores the extent to which we can obtain $S$-club systems with properties that approximate ``niceness''.

\begin{theorem}
\label{thm2}
Let $\lambda=\mu^+$ for $\mu$ a singular cardinal of countable cofinality, and
let $S$ be a stationary subset of $\{\delta<\lambda:\cf(\delta)=\aleph_0\}$.
Further suppose that we have sequences $\langle c_\delta:\delta\in S\rangle$ and
$\langle f_\delta:\delta\in S\rangle$ such that
\begin{enumerate}

\medskip

\item $c_\delta$ is an increasing function from $\omega$ onto a cofinal subset of $\delta$ (for convenience,
 we define $c_\delta(-1)$ to be$-1$)

\medskip

\item $f_\delta$ maps $\omega$ to the set of regular cardinals less than $\mu$, and

\medskip

\item for every closed unbounded $E\subs\lambda$, there are stationarily many $\delta\in S$ such that $c_\delta(n)\in E$ for all $n<\omega$.

\medskip

\end{enumerate}
Then there is an $S$-club system $\langle C_\delta:\delta\in S\rangle$ such that
\begin{enumerate}
\setcounter{enumi}{3}

\medskip

\item $c_\delta(n)\in C_\delta$ for all $n$,

\medskip

\item $|C_\delta\cap (c_\delta(n-1),c_\delta(n)]|\leq f_\delta(n)$, and

\medskip

\item for every closed unbounded $E\subs\lambda$, there are stationarily many $\delta\in S$ such that
\begin{equation}
(\forall n<\omega)(\exists\alpha\in\nacc(C_\delta)\cap
E)\left[c_\delta(n-1)<\alpha<c_\delta(n)\text{ and
}\cf(\alpha)>f_\delta(n)\right]
\end{equation}
\end{enumerate}
\end{theorem}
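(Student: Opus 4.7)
The plan is to adapt the recursive contradiction strategy of Theorem~\ref{cleanthm} to the countable-cofinality setting, leveraging the given sequences $\langle c_\delta\rangle$ and $\langle f_\delta\rangle$ to supply interval-by-interval size bounds in lieu of the global bound available there. Assume for contradiction that no $S$-club system satisfies (4)--(6), and start with the trivial initial system $C^0_\delta = \{c_\delta(n) : n < \omega\}$, which satisfies (4) and (5). Recursively construct $S$-club systems $\bar{C}^k$ for $k < \omega$ together with clubs $E_k \subs \lambda$, where each $\bar{C}^k$ satisfies (4) and (5) (hence must fail (6)), and $E_k$ witnesses the failure of (6) for $\bar{C}^k$. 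For each $k$ and each $\delta$ in the associated club of failing ordinals with $c_\delta[\omega] \subs E_k$, record the least natural number $n^k_\delta$ at whose interval the witness in (6) fails.

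At stage $k+1$, build $C^{k+1}_\delta$ from $C^k_\delta$ by applying the $\drop$--$\fil$ machinery locally: replace the portion of $C^k_\delta$ lying in the half-open interval $(c_\delta(n^k_\delta - 1), c_\delta(n^k_\delta)]$ with a new piece of size at most $f_\delta(n^k_\delta)$, obtained by intersecting $\drop(C^k_\delta, E_k)$ with the interval and gluing in the closed unbounded sets $\fil(\alpha, \drop(C^k_\delta, E_k), E_k)$ for suitable $\alpha \in \nacc(C^k_\delta) \cap \acc(E_k)$ inside the interval. Each endpoint $c_\delta(n)$ is preserved, securing (4); the size inside each interval remains bounded by $f_\delta(n)$, which, being an infinite regular cardinal below $\mu$, absorbs the addition, securing (5).

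For the contradiction, set $E = \bigcap_{k<\omega} E_k$ and use hypothesis (3) together with successive intersection of the per-stage failure clubs to pick $\delta \in S$ with $c_\delta[\omega] \subs E$ and belonging to every stage's failure club. Pigeonholing $\langle n^k_\delta : k < \omega\rangle$ inside $\omega$ yields a single $n^* < \omega$ and an infinite $K \subs \omega$ with $n^k_\delta = n^*$ for all $k \in K$. Pick $\gamma \in E \cap (c_\delta(n^*-1), c_\delta(n^*))$ with $\gamma \notin \bigcup_{k<\omega} C^k_\delta$, and set $\alpha_k := \min(C^k_\delta \setminus \gamma)$. One checks $\alpha_k \in \nacc(C^k_\delta) \cap (\gamma, c_\delta(n^*)]$, that the sequence is non-increasing in $k$ (unchanged when $n^k_\delta \neq n^*$), and that the two-case analysis from Theorem~\ref{cleanthm} --- Case~1 ($\alpha_k \notin \acc(E_k)$) via the drop $\beta_k = \sup(\alpha_k \cap E_k)$, Case~2 ($\alpha_k \in \acc(E_k)$) via the fill --- delivers $\alpha_{k+1} < \alpha_k$ for every $k \in K$, giving an infinite descending sequence of ordinals.

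The main obstacle is twofold. First, the existence of $\gamma$ requires $|E \cap (c_\delta(n^*-1), c_\delta(n^*))| > f_\delta(n^*)$; since $\bigcup_k C^k_\delta$ has at most $\omega \cdot f_\delta(n^*) = f_\delta(n^*)$ elements in that interval, a careful thinning of the stationary set of $\delta$ (and possibly a strengthening of the $E_k$'s during the recursion to keep them ``rich'' in each interval) is needed to secure enough room. Second, the restriction of attention to a single interval per stage must be reconciled with the possibility that $\alpha_k = c_\delta(n^*)$, which sits at the boundary rather than strictly inside; this is handled by including the right endpoint in the attention zone and verifying that the $\fil$ operation then supplies the requisite interior elements. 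Managing these two points without disturbing (4) and (5) is the technical core of the argument.
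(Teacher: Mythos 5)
Your construction follows the same local drop/fill scheme as the paper, but there is a fatal gap in the pigeonhole step: you run the recursion for only $\omega$ stages and then claim that "pigeonholing $\langle n^k_\delta : k < \omega\rangle$ inside $\omega$ yields a single $n^*<\omega$ and an infinite $K\subs\omega$ with $n^k_\delta = n^*$ for all $k\in K$." This is not true. A sequence $\omega\to\omega$ need have no value occurring infinitely often (e.g. the identity), so with $\omega$ stages there may be no $n^*$ at which you can land an infinite descending sequence $\langle\alpha_k:k\in K\rangle$. The paper sidesteps this precisely by running the recursion for $\omega_1$ stages: a function $\omega_1\to\omega$ must take some value uncountably often, and one then extracts the first $\omega$ such stages $\zeta_0<\zeta_1<\cdots$ with common interval index $n^*$, setting $\zeta^* = \sup_n \zeta_n < \omega_1$. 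Since $\lambda=\mu^+$ has uncountable cofinality, the intersection $\bigcap_{\zeta<\omega_1}E_\zeta$ is still club.

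Running for $\omega_1$ stages is not cost-free: you then need to define $\bar{C}^\zeta$ at limit ordinals $\zeta<\omega_1$ while keeping the interval bounds $|C^\zeta_\delta\cap I_\delta(n)|\le f_\delta(n)$ intact. The paper does this by taking $C^\zeta_\delta$ to be (the closure of) the set of $\alpha$ belonging to $C^\xi_\delta$ for all sufficiently large $\xi<\zeta$; since $\zeta$ is countable and $f_\delta(n)$ is an infinite cardinal, the structural bounds survive. Your proposal omits limit stages entirely, so even patching in $\omega_1$ stages would require adding this entire piece. Finally, you do correctly flag the need for $|E\cap I_\delta(n^*)|$ to dominate $\aleph_0\cdot f_\delta(n^*)$, but the resolution is not a vague "thinning of $\delta$'s" --- the paper simply selects $\delta\in S$ with $c_\delta(n)\in\{\alpha<\lambda : \mu\text{ divides }\otp(E^*\cap\alpha)\}$ for all $n$, which forces $|E^*\cap I_\delta(n)|=\mu$ for every $n$ and then $|\bigcup_{\xi<\zeta^*}C^\xi_\delta\cap I_\delta(n^*)|\le\aleph_0\cdot f_\delta(n^*)<\mu$ leaves room for $\beta^*$. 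Your second worry (the case $\alpha_k=c_\delta(n^*)$) is a non-issue because the chosen $\gamma$ (the paper's $\beta^*$) is specifically taken outside $\bigcup_\xi C^\xi_\delta$ and hence strictly below the endpoint $c_\delta(n^*)$, which lies in every $C^\xi_\delta$.
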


We can get a picture of the case of most interest to us in the
following manner. First, notice that the functions $\langle
c_\delta:\delta\in S\rangle$ are essentially a ``standard''
club-guessing sequence of the sort we know exist.
Given $\delta\in S$, the sequence $c_\delta$ chops $\delta$
into an $\omega$ sequence of half-open intervals of the form $(c_\delta(n-1), c_\delta(n)]$.
If we define
\begin{equation}
I_\delta(n):=(c_\delta(n-1), c_\delta(n)],
\end{equation}
then $C_\delta$ is constructed so that $C_\delta\cap I_\delta(n)$ is of cardinality {\em at most}~$f_\delta(n)$.
The club-guessing property tells us that for any closed unbounded $E\subs\lambda$, there are stationarily
many $\delta\in S$ such that for each $n<\omega$, $E\cap\nacc(C_\delta)\cap I_\delta(n)$ contains
an ordinal of cofinality {\em greater than}~$f_\delta(n)$.  In particular, if the sequence $\langle f_\delta(n):n<\omega\rangle$
increases to $\mu$ for all $\delta\in S$, then for every closed unbounded $E\subs\lambda$ there
are stationarily many $\delta\in S$ such that for any $\tau<\mu$,
\begin{equation}
\{\alpha\in E\cap\nacc(C_\delta):\cf(\alpha)>\tau\}\text{ is unbounded in }\delta.
\end{equation}
This almost gives us the assumptions needed to apply Theorem~\ref{equiv}; the problem, however,
is that our hypotheses admit the possibility that~$C_\delta$ is of cardinality $\mu$, and
 this takes us out of the purview of Theorem~\ref{equiv}.

\begin{proof}
Our starting point for this proof is the bare-bones sketch of a similar proof given for Claim~2.8 on page~131 of~\cite{cardarith}.
By way of contradiction, assume that there is no such family $\langle C_\delta:\delta\in S\rangle$.
The proof will require us to construct many $S$-club systems in an attempt to produce the desired object;
let us agree to say that an $S$-club system {\em satisfies the structural requirements of Theorem~\ref{thm2}}
if conditions (4) and (5) hold, and say it {\em satisfies the club-guessing requirements of Theorem~\ref{thm2}}
if condition~(6) holds.

The main thrust of our construction is to define objects $E_\zeta$ and
 $\bar{C}^\zeta=\langle C^\zeta_\delta:\delta\in S\rangle$ by induction on $\zeta<\omega_1$.
 The sets $E_\zeta$ will be closed unbounded
 in $\lambda$, while each $\bar{C}^\zeta$ will be an $S$-club system satisfying the structural requirements
 of Theorem~\ref{thm2}.  Our convention is that {\em stage $\zeta$} refers to the process of defining
 $\bar{C}^{\zeta+1}$ and $E_{\zeta+1}$ from $\bar{C}^\zeta$ and $E_\zeta$.  The reader should also be
 warned that several auxiliary objects will be defined along the way.

\medskip

\noindent{\bf Construction}

\bigskip

\noindent{\sf Initial set-up}

\medskip

We set $E_0 = \lambda$ and $C_\delta^0 = \{c_\delta(n):n<\omega\}$
for each $\delta\in S$.

\bigskip

\noindent{\sf Stage $\zeta$ --- defining $E_{\zeta+1}$ and $\bar{C}^{\zeta+1}$}

\medskip

We assume that $\bar{C}^{\zeta}$ is an $S$-club system satisfying the structural requirements of Theorem~\ref{thm2},
and $E_\zeta$ is a closed unbounded subset of $\lambda$.  We have assumed that Theorem~\ref{thm2} fails,
and so there are closed unbounded subsets $E^0_\zeta$ and $E^1_\zeta$ of $\lambda$
 such that for each $\delta\in E^0_\zeta\cap S$, there is an $n<\omega$ such that
\begin{equation}
\alpha\in\nacc(C_\delta)\cap E^1_\zeta\cap I_\delta(n)\Longrightarrow \cf(\alpha)\leq f_\delta(n).
\end{equation}
We define
\begin{equation}
E_{\zeta+1}:=\acc(E_\zeta\cap E_\zeta^0\cap E_\zeta^1).
\end{equation}

Let us agree to say that an ordinal $\delta\in S$ is {\em active at stage $\zeta$} if $C_\delta^0\subs \acc(E_{\zeta+1})$,
and note that the set of such $\delta$ is stationary.  If $\delta\in S$ is inactive at stage~$\zeta$,
then we do nothing and let $C^{\zeta+1}_\delta= C^\zeta_\delta$.

If $\delta$ {\em is } active at stage $\zeta$, then we know $\delta\in E^0_\zeta$ and so there
is a least $n(\delta,\zeta)<\omega$ such that
\begin{equation}
\alpha\in \nacc(C_\delta^\zeta)\cap E^1_\zeta\cap I_\delta(n(\delta,\zeta))\Longrightarrow \cf(\alpha)\leq f_\delta(n(\delta,\zeta)).
\end{equation}
The construction of $C^{\zeta+1}_\delta$ will modify $C_\delta^\zeta$ only on the interval
 $I_\delta(n(\delta,\zeta))$, that is, we ensure that
\begin{equation}
C^{\zeta+1}_\delta\cap (\delta\setminus I_\delta(n(\delta,\zeta)))= C^\zeta_\delta\cap (\delta\setminus I_\delta(n(\delta,\zeta))).
\end{equation}
Our next step is to define
\begin{equation}
D^\zeta_\delta = \drop(C^\zeta_\delta\cap I_\delta(n(\delta,\zeta)), E_{\zeta+1}\cap I_\delta(n(\delta,\zeta))).
\end{equation}
Note that $D^\zeta_\delta$ is a closed unbounded subset of $c_\delta(n(\delta,\zeta))$ of cardinality
at most $f_\delta(n(\delta,\zeta))$.

We still have a some distance to traverse before arriving at $C^{\zeta+1}_\delta$ --- one should think
of $D^\zeta_\delta$ as being the first approximation to how $C^{\zeta+1}_\delta$ will look on the
interval $I_\delta(n(\delta,\zeta))$.  To finish, let us say that an element $\alpha$ of $D^\zeta_\delta$
{\em needs attention} if
\begin{itemize}
\item $\alpha\in\acc(E_{\zeta+1})\cap\nacc(D^\zeta_\delta)$, and
\medskip
\item $\cf(\alpha)\leq f_\delta(n(\delta,\zeta))$.
\end{itemize}

If $\alpha$ needs attention, then
 $\fil(\alpha,C^\zeta_\delta\cap I_\delta(n(\delta,\zeta)), E_{\zeta+1}\cap I_\delta(n(\delta,\zeta))$
is closed and unbounded in $\gap(\alpha, C^\zeta_\delta)$ and of cardinality $\cf(\alpha)\leq f_\delta(n(\delta, \zeta))$.
We define
 \begin{equation}
 A_\delta^\zeta = D^\zeta_\delta \cup\{\fil(\alpha,C^\zeta_\delta\cap I_\delta(n(\delta,\zeta)), E_{\zeta+1}\cap I_\delta(n(\delta,\zeta)):\alpha\text{ needs attention }\}.
 \end{equation}

Since the needed instances of ``$\fil$'' are always a closed subsets lying in a ``gap'' of $D^\zeta_\delta$,
 the set $A^\zeta_\delta$ is still closed and unbounded in $c_\delta(n(\delta, \zeta))$. Also, simple cardinality
 estimates tell us
\begin{equation}
\left|A^\zeta_\delta\right|\leq f_\delta(n(\delta,\zeta)).
\end{equation}
We now define $C^{\zeta+1}_\delta$ piecewise:
\begin{equation}
C^{\zeta+1}\cap\delta\setminus I_\delta(n(\delta,\zeta)) = C^\zeta_\delta,
\end{equation}
and
\begin{equation}
C^{\zeta+1}\cap I_\delta(n(\delta,\zeta)) = A^\zeta_\delta.
\end{equation}
So defined, our $S$-club system $\bar{C}^{\zeta+1}$ satisfies the structural requirements of Theorem~\ref{thm2}
and the construction continues.

\bigskip

\noindent{\sf $\bar{C}^\zeta$ and $E_\zeta$ for $\zeta$ limit}

\medskip

We begin by setting $E_\zeta = \bigcap_{\xi<\zeta}E_\xi$.  Next, for each $\delta\in S$
we let $C^\zeta_\delta$ be the closure\footnote{This is actually not necessary, as it can be
shown that the set defined in~(\ref{eqn13}) is closed.}  in $\delta$ of
\begin{equation}
\label{eqn13}
\{\alpha:\alpha\in C^\xi_\delta\text{ for all sufficiently large }\xi<\zeta\}.
\end{equation}
The set $C_\delta^\zeta$ defined above is closed in $\delta$ by definition. Since it contains
$C_\delta^0$, it is also unbounded.  Finally,
\begin{equation}
C_\delta^\zeta\cap I_\delta(n)\subs \bigcup_{\xi<\zeta}C_\delta^\xi\cap I_\delta(n).
\end{equation}
Since $\zeta$ is countable and $f_\delta(n)$ is a cardinal, it follows that
\begin{equation}
\left|C^\zeta_\delta\cap I_\delta(n)\right|\leq f_\delta(n)
\end{equation}
for all $n$, and therefore $\langle C^\zeta_\delta:\delta\in S\rangle$ satisfies the structural
requirements of Theorem~\ref{thm2}.

\medskip

\noindent{\bf End Construction}

\medskip

Having constructed $\bar{C}^\zeta$ and $E_\zeta$ for all $\zeta<\omega_1$, we turn now to obtaining a contradiction.
Let us define
\begin{equation}
E^*:=\bigcap_{\zeta<\omega_1}E_\zeta.
\end{equation}
It is clear that $E^*$ is club in $\lambda$, and so there is a $\delta\in S$ such that
\begin{equation}
\label{eqn14}
C_\delta^0\subs \{\alpha<\lambda: \mu\text{ divides }\otp(E^*\cap\alpha)\}.
\end{equation}
Let us fix such a $\delta$, and note that
\begin{equation}
\label{2.27}
|E^*\cap I_\delta(n)|=\mu \text{ for all }n<\omega.
\end{equation}

For each $\zeta<\omega_1$, we know from (\ref{eqn14}) that $\delta$ is active at each stage $\zeta<\omega_1$.
In particular, $n(\delta, \zeta)$ is defined for all $\zeta<\omega_1$ and hence there is a least $n^*<\omega$
such that $n(\delta, \zeta)=n^*$ for infinitely many $\zeta$.  Let $\langle \zeta_n:n<\omega\rangle$
list the first $\omega$ such ordinals, and let $\zeta^*=\sup\{\zeta_n:n<\omega\}$.

Choose an ordinal $\beta^*\in E^*\cap I_\delta(n^*)\setminus \bigcup_{\xi<\zeta^*} C^\xi_\delta$ ---
this is possible because of~(\ref{2.27}), as
\begin{equation}
\left|\bigcup_{\xi<\zeta^*} C^\xi_\delta\cap I_\delta(n^*)\right|\leq \aleph_0\cdot f_\delta(n^*)<\mu.
\end{equation}
Finally define
\begin{equation}
\beta_n := \min(C^{\zeta_n}_\delta\setminus\beta^*)
\end{equation}
for each $n<\omega$.  Notice that our choice of $\beta^*$ guarantees that $\beta^*$ is strictly
less than $\beta_n$ for all $n$.

\begin{claim}
For each $n$, we have $\beta_{n+1}<\beta_n$.
\end{claim}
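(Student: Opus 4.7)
The plan is to mimic the case analysis in the proof of Theorem~\ref{cleanthm}, exploiting the fact that at stage $\zeta_n$ the construction modifies $C_\delta^{\zeta_n}$ precisely on $I_\delta(n^*)$ (since $n(\delta,\zeta_n)=n^*$), and then arguing that whatever point we insert into $I_\delta(n^*)$ at that stage persists through the intermediate stages out to $\zeta_{n+1}$. I would first collect basic facts about $\beta_n$: it lies in $I_\delta(n^*)$ with $\beta^*<\beta_n\leq c_\delta(n^*)$, the strict inequality using $\beta^*\notin C_\delta^{\zeta_n}$, which (since $C_\delta^{\zeta_n}$ is closed in $\delta$) also gives $\sup(C_\delta^{\zeta_n}\cap\beta_n)<\beta^*$ and hence $\beta_n\in\nacc(C_\delta^{\zeta_n}\cap I_\delta(n^*))$ with $\beta^*$ sitting in its gap. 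Because $\beta^*\in E^*\subseteq E_{\zeta_n+1}$, the drop set $D_\delta^{\zeta_n}$ is nontrivial on the interval and $\sup(\beta_n\cap E_{\zeta_n+1}\cap I_\delta(n^*))$ makes sense.

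I would then split on whether $\beta_n\in\acc(E_{\zeta_n+1})$. If not, set $\gamma_n:=\sup(\beta_n\cap E_{\zeta_n+1})$; then $\beta^*\leq\gamma_n<\beta_n$ and $\gamma_n\in D_\delta^{\zeta_n}\subseteq C_\delta^{\zeta_n+1}$. If so, then $\beta_n\in E^1_{\zeta_n}$, and the defining property of $n(\delta,\zeta_n)=n^*$ forces $\cf(\beta_n)\leq f_\delta(n^*)$; combined with $\beta_n\in D_\delta^{\zeta_n}\cap\nacc(D_\delta^{\zeta_n})$, this means $\beta_n$ needs attention at stage $\zeta_n$, so the set $\fil(\beta_n,C_\delta^{\zeta_n}\cap I_\delta(n^*),E_{\zeta_n+1}\cap I_\delta(n^*))$ is a closed cofinal subset of the gap $\gap(\beta_n,C_\delta^{\zeta_n}\cap I_\delta(n^*))$ and is placed inside $A_\delta^{\zeta_n}\subseteq C_\delta^{\zeta_n+1}$; one then picks any $\gamma_n\geq\beta^*$ inside it. Either way, $\gamma_n\in C_\delta^{\zeta_n+1}\cap[\beta^*,\beta_n)$.

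The last and most delicate step is to verify persistence, namely $\gamma_n\in C_\delta^{\zeta_{n+1}}$. By minimality of $n^*$ and of the enumeration $\langle\zeta_k:k<\omega\rangle$, no $\xi$ strictly between $\zeta_n$ and $\zeta_{n+1}$ satisfies $n(\delta,\xi)=n^*$, so at successor stages in that range the construction leaves $C_\delta^\cdot\cap I_\delta(n^*)$ untouched; at limit stages $\zeta$ in that range, $C_\delta^\zeta$ contains every ordinal which belonged to $C_\delta^\xi$ for all sufficiently large $\xi<\zeta$. A short transfinite induction on $\zeta\in[\zeta_n+1,\zeta_{n+1}]$ then yields $\gamma_n\in C_\delta^{\zeta_{n+1}}$. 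Since $\zeta_{n+1}<\zeta^*$, we still have $\beta^*\notin C_\delta^{\zeta_{n+1}}$, so $\beta_{n+1}\leq\gamma_n<\beta_n$. I expect the persistence step to be the main obstacle, because it is the one spot where the countable limit-stage reshuffling of the $C_\delta^\zeta$'s must be controlled; once it is in place, the case analysis is essentially the one from Theorem~\ref{cleanthm}.
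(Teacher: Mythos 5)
Your proof is correct and matches the paper's argument: the same case split on whether $\beta_n\in\acc(E_{\zeta_n+1})$, and in each case the same production of a new element of $C_\delta^{\zeta_n+1}$ in the interval $[\beta^*,\beta_n)$ (a drop point in Case~1, a point of the relevant $\fil$ set in Case~2). The persistence step, which you rightly flag as the delicate point, is dispatched by the paper in one line via the assertion $\min(C^{\zeta_n+1}_\delta\setminus\beta^*)=\min(C^{\zeta_{n+1}}_\delta\setminus\beta^*)$, justified by noting $n(\delta,\xi)\neq n^*$ for $\zeta_n<\xi<\zeta_{n+1}$; your witness-tracking induction through the intermediate successor and limit stages simply makes that justification explicit rather than taking a different route.
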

\begin{proof}
Fix $n$.  It is clear from our construction that
\begin{equation}
\min(C^{\zeta_n+1}_\delta\setminus\beta^*)=\min(C^{\zeta_{n+1}}_\delta)=\beta_{n+1}
\end{equation}
because $\beta_n\in I_\delta(n^*)$ and $n(\delta,\xi)\neq n^*$ if $\zeta_n<\xi<\zeta_{n+1}$.

We now track what happens to $\beta_n$ during stage $\zeta_n$ by splitting into two cases.

\bigskip

\noindent{\sf Case 1: $\beta_n \notin \acc(E_{\zeta_n+1})$.}

\medskip

In this case, we note that since $\beta^*\in E_{\zeta_n+1}$ we have
\begin{equation}
\beta^*\leq \sup(\beta_n\cap E_{\zeta_n+1})<\beta_n.
\end{equation}
Since $\beta^*\notin C^{\zeta_n+1}$ while
\begin{equation}
\sup(\beta_n\cap E_{\zeta_n+1})\in D^{\zeta_n+1}_\delta\subs C^{\zeta_n+1}_\delta,
\end{equation}
it follows that
$\beta^*<\beta_{n+1}<\beta_n$ and we are done.

\bigskip

\noindent{\sf Case 2: $\beta_n\in \acc(E_{\zeta_n+1})$.}

\medskip

Since $\beta^*<\beta_n$, the definition of $\beta_n$ tells us that $\beta_n$ must be in
$\nacc(C^{\zeta_n}_\delta)$.
Also, both~$\delta$ and $\beta_n$ are in $E_{\zeta_n+1}$, so in particular $\delta\in E^0_{\zeta_n}$
and $\beta_n\in E^1_{\zeta_n}$.  This tells us $\cf(\beta_n)\leq f_\delta(n(\delta,\zeta_n))$.

By our case hypothesis, $\beta_n=\sup(E_{\zeta_n+1}\cap\beta_n)$ and so $\beta_n \in D^{\zeta_n}_\delta$
and
\begin{equation}
\beta_n = \min(D^{\zeta_n}_\delta\setminus\beta^*)>\beta^*.
\end{equation}
We conclude
\begin{equation}
\beta_n\in \nacc(D^{\zeta_n}),
\end{equation}
and so $\beta_n$ needs attention during the construction of $C^{\zeta_n+1}_\delta$.
In particular,

\begin{equation}
\fil\left(\beta_n,C^\zeta_\delta\cap I_\delta\bigl(n(\delta,\zeta)\bigr), E_{\zeta+1}\cap I_\delta\bigl(n(\delta,\zeta)\bigr)\right)\subs C^{\zeta_n+1}_\delta
\end{equation}
and so
\begin{equation}
C_\delta^{\zeta_n+1}\cap (\beta^*,\beta_n)\neq\emptyset.
\end{equation}
We conclude
\begin{equation}
\beta^*<\beta_{n+1}=\min(C^{\zeta_n+1}_\delta\setminus\beta^*)=\min(C^{\zeta_{n+1}}_\delta\setminus\beta^*)<\beta_n
\end{equation}
as required.
\end{proof}
Using the preceding claim, we get a strictly decreasing set of ordinals.
This is absurd, and Theorem~\ref{thm2} is established.
 \end{proof}

Club-guessing systems structured like those provided by Theorem~\ref{thm2} will occupy our attention
for the rest of this paper, so we will give them a name.

\begin{definition}
\label{wellformeddef} Let $\lambda=\mu^+$ for $\mu$ singular of
countable cofinality, and let $S$ be a stationary subset of
$\{\delta<\lambda:\cf(\delta)=\aleph_0\}$.  An $S$-club system
$\langle C_\delta:\delta\in S\rangle$ is {\em well-formed} if there
is a function $f_{\bar{C}}:\omega\rightarrow \mu$ and functions
 $c_\delta:\omega\rightarrow\delta$ for each $\delta\in S$ such that
such that
\begin{enumerate}
\item $c_\delta$ is strictly increasing with range cofinal in $\delta$
\medskip
\item $\langle f_{\bar{C}}(n):n<\omega\rangle$ is a strictly increasing sequence of regular cardinals cofinal in $\mu$
\medskip
\item for each $n$, $|C_\delta \cap (c_\delta(n-1), c_\delta(n)]|\leq f_{\bar{C}}(n)$
\medskip
\item for each $n$, if $\alpha\in \nacc(C_\delta\cap (c_\delta(n-1),c_\delta(n)]$ then $\cf(\alpha)>f_{\bar{C}}(n)$
\medskip
\item if $E$ is closed and unbounded in $\lambda$, then there are stationarily many $\delta\in S$ such that
\begin{equation}
E\cap\nacc(C_\delta)\cap (c_\delta(n-1), c_\delta(n)]\neq\emptyset\text{ for all }n<\omega.
\end{equation}
\end{enumerate}
If there is a well-formed $S$-club system, then we say that {\em $S$ carries a well-formed club-guessing
sequence}.  We continue to use the notation $I_\delta(n)$ to indicate the interval
 $(c_\delta(n-1),c_\delta(n)]$ (where our convention is that $c(-1)=-1$), and refer to this sequence
 of intervals as {\em the interval structure of $C_\delta$}.  The function $f_{\bar{C}}$ is said to {\em measure $\bar{C}$}.
\end{definition}

\begin{proposition}
Let $S$ be a stationary subset of $\{\delta<\lambda:\cf(\delta)=\aleph_0\}$ where $\lambda=\mu^+$ with $\mu$ singular
of countable cofinality.  If $f:\omega\rightarrow\mu$ enumerates a strictly increasing sequence of regular cardinals
that is cofinal in $\mu$, then $S$ carries a well-formed club-guessing sequence that is measured by $f$.
\end{proposition}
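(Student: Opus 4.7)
The plan is to reduce to Theorem~\ref{thm2} and then thin out the resulting club system in order to promote its guessing point to hold at \emph{every} non-accumulation point of each interval, which is what distinguishes well-formedness from the weaker conclusion of Theorem~\ref{thm2}. First, I produce an auxiliary family $\langle c_\delta:\delta\in S\rangle$ of strictly increasing cofinal functions $c_\delta:\omega\rightarrow\delta$ with two properties: (i) $\cf(c_\delta(n))>f(n)$ for every $n<\omega$, and (ii) for every closed unbounded $E\subseteq\lambda$ there are stationarily many $\delta\in S$ with $c_\delta(n)\in E$ for all $n$. This mildly strengthens standard club-guessing at countable cofinality; starting from any such guessing family, we diagonally adjust each $c_\delta(n)$ upward into the stationary set $T_n:=\{\alpha<\lambda:\cf(\alpha)>f(n)\}$, using that $E\cap T_n$ remains cofinal in every $\delta\in S$ of cofinality~$\aleph_0$ whenever $E$ is club.

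With this family in hand and $f_\delta:=f$ the constant choice, the hypotheses of Theorem~\ref{thm2} are satisfied, so we obtain an $S$-club system $\langle C_\delta:\delta\in S\rangle$ meeting its conclusions~(4)--(6). Define the thinned system
\[
C'_\delta:=\cl_\delta\Bigl(\{c_\delta(n):n<\omega\}\cup\bigcup_{n<\omega}\{\alpha\in C_\delta\cap I_\delta(n):\cf(\alpha)>f(n)\}\Bigr),
\]
where the closure is taken inside $\delta$ in the order topology. Each $C'_\delta$ is closed by construction and unbounded in $\delta$ because it contains $\{c_\delta(n):n<\omega\}$. Any limit point added by the closure is bounded by some $c_\delta(n)$ and necessarily lies inside a single interval $I_\delta(n)$, so inside each $I_\delta(n)$ we are closing a subset of cardinality at most $f(n)$ in a linearly ordered set, whose closure again has cardinality at most $f(n)$; this gives the cardinality bound~(3) of well-formedness.

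For condition~(4), every $\alpha\in\nacc(C'_\delta\cap I_\delta(n))$ must belong to the pre-closure set, because any ordinal added by the closure is automatically an accumulation point of $C'_\delta\cap I_\delta(n)$; hence $\alpha$ is either $c_\delta(n)$ (with $\cf(\alpha)>f(n)$ by Step~1) or an $\alpha\in C_\delta$ satisfying $\cf(\alpha)>f(n)$ by construction. For the guessing condition~(5), given a club $E$ pick $\delta\in S$ witnessing Theorem~\ref{thm2}(6) for $E$, and for each $n$ let $\alpha_n\in\nacc(C_\delta)\cap E\cap I_\delta(n)$ with $\cf(\alpha_n)>f(n)$; then $\alpha_n\in C'_\delta$, and $\alpha_n$ remains a non-accumulation point of $C'_\delta$ because thinning only removes ordinals from $C_\delta$, so $\sup(C'_\delta\cap\alpha_n)\leq\sup(C_\delta\cap\alpha_n)<\alpha_n$. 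The main obstacle is Step~1, as it is the single place where pointwise cofinality constraints must be combined with club-guessing; once the auxiliary sequence is available, the rest is routine bookkeeping on top of Theorem~\ref{thm2}.
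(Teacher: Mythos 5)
Your Step~1 is not a ``mild strengthening'' of standard club-guessing at countable cofinality; it is essentially equivalent to the existence of a nice club-guessing sequence on $S$, which is precisely the open Question~\ref{q1}. To see this, suppose $c_\delta:\omega\rightarrow\delta$ is strictly increasing and cofinal with $\cf(c_\delta(n))>f(n)$ for all $n$ and with (ii). Since $f(n)\rightarrow\mu$, one can pass to an infinite subsequence of $c_\delta$ along which the cofinalities strictly increase; the subsequence is still of order type $\omega$, cofinal in $\delta$, contained in the original range (so clause (ii) survives), and its successive cofinalities are strictly increasing and cofinal in $\mu$. That is exactly an $S$-club system satisfying condition~(3) of Theorem~\ref{equiv}, i.e.\ a nice club-guessing sequence. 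The heuristic you offer --- pushing $c_\delta(n)$ upward into $T_n=\{\alpha:\cf(\alpha)>f(n)\}$ and invoking cofinality of $E\cap T_n$ in $\delta$ --- does not work: the adjusted point $\min\bigl(T_n\setminus(c_\delta(n)+1)\bigr)$ depends only on the original $c_\delta(n)$ and bears no relation to an arbitrary club $E$, so there is no reason clause (ii) survives the adjustment. You are, in effect, assuming the hard part of the problem.

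The paper's actual thinning avoids this trap by never forcing the markers $c_\delta(n)$ into the thinned club. It applies Theorem~\ref{thm2} with $f_\delta=f$ and an ordinary club-guessing family $\langle c_\delta\rangle$ (no cofinality constraints), and then lets $D_\delta$ be the closure in $\delta$ of
\begin{equation*}
D^*_\delta=\{\alpha\in\nacc(C_\delta):\text{ if }\alpha\in I_\delta(n)\text{ then }\cf(\alpha)>f(n)\}.
\end{equation*}
Since $D^*_\delta\subseteq C_\delta$ and $C_\delta$ is closed, we have $D_\delta\subseteq C_\delta$ (giving condition~(3)), and any point of $D_\delta\setminus D^*_\delta$ is an accumulation point of $D_\delta$ and hence exempt from condition~(4), so $\nacc(D_\delta)\subseteq D^*_\delta$ and condition~(4) is automatic. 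Condition~(5) then follows from Theorem~\ref{thm2}(6), whose witnesses land in $D^*_\delta\cap I_\delta(n)$. The set $D^*_\delta$ may fail to be unbounded in $\delta$ for some $\delta\in S$, but this costs nothing: for those $\delta$ (and this includes all inactive $\delta$) one replaces both $D_\delta$ and $c_\delta$ by an arbitrary $\omega$-sequence cofinal in $\delta$ with $n$-th term of cofinality exceeding $f(n)$ --- available whenever $\mu$ divides $\otp(\delta)$, a club condition --- and conditions~(3) and~(4) hold vacuously, while (5) is a ``stationarily many'' clause unaffected by these changes. Your verifications of (3) and (5) are sound, and your observation that $C'_\delta\subseteq C_\delta$ (because $c_\delta(n)\in C_\delta$) is correct; the single error is that by inserting $\{c_\delta(n):n<\omega\}$ into the thinned set you created a cofinality obligation on $c_\delta(n)$ that cannot be met without answering Question~\ref{q1}.
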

\begin{proof}
For each $\delta\in S$, we set  $f_\delta = f$ and apply Theorem~\ref{thm2}.  The $S$-club system
 $\langle C_\delta:\delta\in S\rangle$ that arises need not satisfy condition (4) of
Definition~\ref{wellformeddef}, so for  each $\delta\in S$ we define
\begin{equation}
D_\delta^*=\{\alpha\in\nacc(C_\delta): \text{ if $\alpha\in I_\delta(n)$, then $\cf(\alpha)>f_\delta(n)$}\},
\end{equation}
and let $D_\delta$ equal the closure of $D_\delta^*$ in $\delta$.  The proof that $\langle D_\delta:\delta\in S\rangle$
is as required is routine and left to the reader.
\end{proof}

We remark that any $S$-club system $\langle C_\delta:\delta\in S\rangle$ providing a positive answer to Question~\ref{q1}
is also  essentially well-formed ---  given
any increasing function $f$ mapping $\omega$ onto a set of regular cardinals cofinal in $\mu$, it is straightforward
to ``thin out'' the $C_\delta$ to get a well-formed $S$-club system $\bar{D}$ measured by $f$.

We move now to some terminology concerning club-guessing ideals taken from~\cite{cardarith}.  We start
with a basic definition.

\begin{definition}
Let $\bar{C}=\langle C_\delta:\delta\in S\rangle$ be an $S$-club system for $S$ a stationary subset of
some cardinal~$\lambda$, and suppose $\bar{I}=\langle I_\delta:\delta\in S\rangle$ is a sequence such that
$I_\delta$ is an ideal on $C_\delta$ for each $\delta\in S$. The ideal $\id_p(\bar{C},\bar{I})$
consists of all sets $A\subs\lambda$ such that for some closed unbounded $E\subs\lambda$,
\begin{equation}
\delta\in S\cap E\Longrightarrow E\cap A\cap C_\delta\in I_\delta.
\end{equation}
\end{definition}

\begin{proposition}
\label{idpprop}
Suppose $\lambda=\mu^+$ for $\mu$ singular of countable cofinality, and let $\bar{C}$ be a well-formed
$S$-club system for some stationary $S\subs\{\delta<\lambda:\cf(\delta)=\aleph_0\}$.  Let $I_\delta$
be the ideal on $C_\delta$ generated by sets of the form
\begin{equation}
\{\gamma\in C_\delta:\gamma\in\acc(C_\delta)\text{ or }\cf(\gamma)<\alpha\text{ or }\gamma<\beta\}
\end{equation}
for $\alpha<\mu$ and $\beta<\delta$.  Then $\id_p(\bar{C},\bar{I})$ is a proper ideal.
\end{proposition}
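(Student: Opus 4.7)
The plan is to show that $\lambda \notin \id_p(\bar C,\bar I)$ by contradiction. Suppose otherwise, so there is a closed unbounded $E\subseteq\lambda$ with $E\cap C_\delta\in I_\delta$ for every $\delta\in S\cap E$. Since $I_\delta$ is generated by sets of the stated form and a finite union of such sets is again of the same form (take the maximum of the $\alpha$'s and of the $\beta$'s), for each $\delta\in S\cap E$ we can pick $\alpha_\delta<\mu$ and $\beta_\delta<\delta$ witnessing that $E\cap C_\delta$ is covered, i.e.
\begin{equation*}
\gamma\in E\cap C_\delta \;\Longrightarrow\; \gamma\in\acc(C_\delta)\ \text{or}\ \cf(\gamma)<\alpha_\delta\ \text{or}\ \gamma<\beta_\delta.
\end{equation*}

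The next step is a standard stabilization of the parameters on a stationary set. By Fodor's lemma applied to $\delta\mapsto\beta_\delta$ on $S\cap E$, I obtain a stationary $S'\subseteq S\cap E$ on which $\beta_\delta$ is a constant value $\beta^*<\lambda$. Since $\alpha_\delta<\mu<\lambda$ and $S'$ is stationary, pigeonhole yields a stationary $S''\subseteq S'$ on which $\alpha_\delta$ is a constant value $\alpha^*<\mu$.

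Now I bring in well-formedness. Let $f=f_{\bar C}$ measure $\bar C$. By the club-guessing clause (5) of Definition~\ref{wellformeddef} applied to the club $E$, there are stationarily many $\delta\in S$ for which $E\cap\nacc(C_\delta)\cap I_\delta(n)\neq\emptyset$ for every $n<\omega$; intersecting with the stationary $S''$ I fix some such $\delta\in S''$ with $\delta>\beta^*$. Since $\langle f(n):n<\omega\rangle$ is cofinal in $\mu$ and $c_\delta$ is cofinal in $\delta$, I choose $n<\omega$ large enough that $f(n)>\alpha^*$ and $c_\delta(n-1)>\beta^*$, and pick $\gamma\in E\cap\nacc(C_\delta)\cap I_\delta(n)$.

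The contradiction is now immediate: $\gamma\in\nacc(C_\delta)$ rules out $\gamma\in\acc(C_\delta)$; clause (4) of Definition~\ref{wellformeddef} gives $\cf(\gamma)>f(n)>\alpha^*$; and $\gamma>c_\delta(n-1)>\beta^*$. So none of the three disjuncts in the covering condition for $E\cap C_\delta\in I_\delta$ can hold at~$\gamma$, contradicting $\gamma\in E\cap C_\delta$. Hence $\lambda\notin\id_p(\bar C,\bar I)$ and the ideal is proper. The only delicate point is the stabilization of $\alpha_\delta$, but the inequality $\mu<\lambda$ combined with stationarity makes pigeonhole sufficient; no genuine obstacle arises.
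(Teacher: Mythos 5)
Your argument has the same essential core as the paper's: apply the club-guessing clause~(5) of Definition~\ref{wellformeddef} to the club $E$ to get a $\delta$ with $E\cap\nacc(C_\delta)\cap I_\delta(n)\neq\emptyset$ for every $n$, then use clause~(4) together with the cofinality of $f_{\bar{C}}$ in $\mu$ and of $c_\delta$ in $\delta$ to find a $\gamma$ that defeats any proposed cover of $E\cap C_\delta$ by a generating set. The paper runs this directly rather than by contradiction, but that difference is cosmetic. Your observation that a finite union of generating sets is again a generating set (taking maxima) is correct and worth stating.

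However, your Fodor/pigeonhole stabilization is not just superfluous --- it introduces a genuine gap. Having produced a stationary $S''\subseteq S\cap E$ on which $\alpha_\delta=\alpha^*$ and $\beta_\delta=\beta^*$ are constant, you then ``intersect with'' the stationary set $T$ of $\delta\in S$ satisfying clause~(5) for $E$ and claim to find $\delta\in S''\cap T$. But two stationary subsets of $\lambda$ can be disjoint, so this intersection may be empty, and nothing you have done forces it to be nonempty. The repair is to drop the stabilization entirely: pick $\delta\in T\cap E$, which is nonempty because $T$ is stationary and $E$ is club (and then $\delta\in S\cap E$, since $T\subseteq S$). For this single $\delta$ you have $\delta$-specific witnesses $\alpha_\delta<\mu$ and $\beta_\delta<\delta$, and you choose $n$ with $f_{\bar{C}}(n)>\alpha_\delta$ and $c_\delta(n-1)>\beta_\delta$ exactly as you did with $\alpha^*,\beta^*$. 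The contradiction is local to a single $\delta$, so no uniformization of the witnesses across different $\delta$'s is ever needed.
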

\begin{proof}
We need to verify that $\lambda\notin \id_p(\bar{C},\bar{I})$.  If we unpack the meaning of this,
we see that we need that for every closed unbounded $E\subs\lambda$, there is a $\delta\in S$
such that $E\cap C_\delta\notin I_\delta$.  This means that for each $\alpha<\mu$ and $\beta<\delta$,
there needs to be a $\gamma\in E\cap\nacc(C_\delta)$ greater than $\beta$ with cofinality greater than~$\alpha$,
and this follows immediately from the definition of well-formed.
\end{proof}

With the preceding proposition in mind, if we say that $(\bar{C},\bar{I})$ is a well-formed $S$-club
system, we mean that $\bar{C}$ is as in Definition~\ref{wellformeddef}, and $\bar{I}=\langle I_\delta:\delta\in S\rangle$
is the sequence of ideals defined as in Proposition~\ref{idpprop}.  The ideals $\id_p(\bar{C},\bar{I})$
for well-formed $(\bar{C},\bar{I})$
lie at the heart of the coloring theorems presented in the sequel.

\section{Parameterized Walks}

In this section, we develop a generalization of Todor{\v{c}}evi{\'c}'s technique of minimal walks~\cite{stevochapter,minimal,stevobook}.
The notation is a bit cumbersome, but this seems to be unavoidable given the complexity of the ideas we are trying to voice.

\begin{definition}
Let $\lambda$ be a cardinal.  A {\em generalized $C$-sequence} is a family $$\langle e_\alpha^n:\alpha<\lambda,n<\omega\rangle$$
such that for each $\alpha<\lambda$ and $n<\omega$,
\begin{itemize}
\item $e^n_\alpha$ is closed unbounded in $\alpha$, and
\medskip
\item $e^n_\alpha\subs e^{n+1}_\alpha$.
\end{itemize}
\end{definition}

The next lemma connects the above definition with concepts from the preceding section.

\begin{lemma}
Let $\lambda=\mu^+$ for $\mu$ singular of countable cofinality, and let $(\bar{C},\bar{I})$ be a well-formed
$S$-club system for some stationary $S\subs\lambda$ consisting of ordinals of countable cofinality.  There
is a generalized $C$-sequence $\langle e^n_\alpha:\alpha<\lambda, n<\omega\rangle$ such that
\begin{itemize}
\item $|e^n_\alpha|\leq \cf(\alpha)+f_{\bar{C}}(n)+\aleph_1$, and
\medskip
\item $\delta\in S\cap e^n_\alpha\Longrightarrow C_\delta\cap I_\delta(n)\subs e^n_\alpha$.
\end{itemize}
\end{lemma}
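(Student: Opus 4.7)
The plan is to build $e^n_\alpha$ by recursion on $n$. For each $\alpha<\lambda$, fix a club $d_\alpha$ in $\alpha$ of order-type $\cf(\alpha)$, and set $e^{-1}_\alpha := d_\alpha$. At stage $n\geq 0$ define the operator
\[
T_n(X) = \overline{X} \cup \bigcup\{C_\delta\cap I_\delta(n) : \delta\in S\cap X\},
\]
where $\overline{X}$ denotes topological closure in $\alpha$. Starting from $Y^n_0 := e^{n-1}_\alpha$, iterate $T_n$ along $\omega_1$: let $Y^n_{\xi+1}=T_n(Y^n_\xi)$, and at limit $\xi$ set $Y^n_\xi=\overline{\bigcup_{\eta<\xi}Y^n_\eta}$. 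Define $e^n_\alpha := Y^n_{\omega_1}$. Then $e^n_\alpha\subs e^{n+1}_\alpha$ and each $e^n_\alpha$ is closed and unbounded in $\alpha$ directly from the construction.

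For the cardinality bound, well-formedness of $\bar C$ gives $|C_\delta\cap I_\delta(n)|\leq f_{\bar C}(n)$ for $\delta\in S$, so each successor step grows the stage by at most $|Y^n_\xi|\cdot f_{\bar C}(n)$ new ordinals, while topological closure preserves the cardinality of an infinite set of ordinals (the map $\beta\mapsto X\cap\beta$ is injective on $\overline{X}$). Bounding each $|Y^n_\xi|$ inductively and summing over $\omega_1$ steps yields $|e^n_\alpha|\leq |e^{n-1}_\alpha|+f_{\bar C}(n)+\aleph_1$, and a second induction on $n$ (using monotonicity of $f_{\bar C}$) gives the desired bound $|e^n_\alpha|\leq\cf(\alpha)+f_{\bar C}(n)+\aleph_1$.

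The key step is verifying the absorption property $\delta\in S\cap e^n_\alpha \Rightarrow C_\delta\cap I_\delta(n)\subs e^n_\alpha$. If $\delta$ enters at some bounded stage $\xi<\omega_1$, then $T_n$ puts $C_\delta\cap I_\delta(n)$ into $Y^n_{\xi+1}\subs e^n_\alpha$ at the next step and we are done. The subtle case is when $\delta\in e^n_\alpha$ appears only as a topological limit point at the final stage, outside every $Y^n_\xi$ with $\xi<\omega_1$. Since $\delta\in S$ forces $\cf(\delta)=\aleph_0$, fix an $\omega$-sequence in $\bigcup_{\xi<\omega_1}Y^n_\xi$ converging to $\delta$; by regularity of $\omega_1$ the whole $\omega$-sequence lies in a single $Y^n_{\xi^*}$, and since $Y^n_{\xi^*}$ is closed in $\alpha$ we get $\delta\in Y^n_{\xi^*}$, reducing to the previous case.

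The main obstacle is precisely this last argument --- one has to guarantee that no element of $S$ sneaks into $e^n_\alpha$ only at the final topological-closure step without being absorbed already in some intermediate stage. The resolution exploits both the countable cofinality of elements of $S$ and the regularity of $\omega_1$ used as the iteration length, which is also the reason the summand $\aleph_1$ appears in the cardinality bound.
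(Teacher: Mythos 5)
Your construction is essentially identical to the paper's: iterate the absorption-and-closure operator for $\omega_1$ steps starting from a fixed club of order type $\cf(\alpha)$, with $e^{n+1}_\alpha$ built on top of $e^n_\alpha$, and the paper then declares the verification routine. Your proposal additionally spells out the cardinality estimate and the key closure argument (an $\omega$-sequence witnessing $\cf(\delta)=\aleph_0$ is caught at a bounded stage by regularity of $\omega_1$), which is exactly the content the paper elides; the only cosmetic wrinkle is that your $T_n(X)$ need not itself be closed when $\xi^*$ is a successor, but $\delta\in\overline{Y^n_{\xi^*}}\subseteq Y^n_{\xi^*+1}$ closes that gap immediately.
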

\begin{proof}
We will obtain $e^n_\alpha$ as the closure (in $\alpha$) of a union
of approximations $e^n_\alpha[\beta]$ for $\beta<\omega_1$. We start by letting $e_\alpha$ be
closed unbounded in $\alpha$ of order-type $\cf(\alpha)$ for each $\alpha<\lambda$. The
construction proceeds as follows:
\begin{eqnarray*}
e^0_\alpha[0]&= &e_\alpha\\
e^n_\alpha[\beta+1] &= &\text{ closure in $\alpha$ of }e^n_\alpha[\beta]\cup \bigcup_{\delta\in S\cap e^n_\alpha[\beta]}C_\delta\cap I_\delta(n)\\
e^{n+1}_\alpha[0] & = &e^n_\alpha\\
e^n_\alpha[\beta] &= &\text{ closure in $\alpha$ of
}\bigcup_{\gamma<\beta}e^n_\alpha[\gamma]\text{ for $\beta$ limit}\\
e^n_\alpha &= &\text{ closure in $\alpha$ of }\bigcup_{\beta<\omega_1}e^n_\alpha[\beta].
\end{eqnarray*}
The verification that $\langle e^n_\alpha:\alpha<\lambda,n<\omega\rangle$ has the required properties
is routine.
\end{proof}

The relationship between the generalized $C$-sequence obtained above and the given well-formed
$S$-club system $(\bar{C},\bar{I})$ is important enough that it ought to have a name.

\begin{definition}
\label{swallow}
Let $\lambda=\mu^+$ for $\mu$ singular of cofinality $\aleph_0$, and
suppose $(\bar{C},\bar{I})$ is a well-formed $S$-club system for
some stationary $S\subs\{\delta<\lambda:\cf(\delta)=\aleph_0\}$. A
generalized $C$-sequence $\bar{e}$ is said to {\em swallow
}$(\bar{C},\bar{I})$ if
\begin{enumerate}
\item $|e^n_\alpha|\leq \cf(\alpha)+f_{\bar{C}}(n)+\aleph_1$, and
\medskip
\item $\delta\in S\cap e^n_\alpha\Longrightarrow C_\delta\cap I_\delta(n)\subs e^n_\alpha$.
\end{enumerate}
\end{definition}

The most important property enjoyed by these cumbersome generalized $C$-sequences is isolated
by the following lemma.

\begin{lemma}
\label{swallowlemma}
Suppose $\bar{e}$ swallows the well-formed $S$-club system
$(\bar{C},\bar{I})$. If $\delta$ is in  $S\cap e^m_\alpha$ for some
$m<\omega$, then
\begin{equation}
(\forall^*n<\omega)\left[\nacc(C_\delta)\cap I_\delta(n)\subs \nacc(e^n_\alpha)\right].
\end{equation}
\end{lemma}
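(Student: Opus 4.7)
The plan is to exploit the mismatch between the upper bound on $|e^n_\alpha|$ provided by swallowing and the lower bound on $\cf(\beta)$ that well-formedness places on elements of $\nacc(C_\delta)\cap I_\delta(n)$. First I would note that since $e^m_\alpha\subs e^n_\alpha$ for every $n\geq m$, the hypothesis $\delta\in S\cap e^m_\alpha$ propagates to give $\delta\in e^n_\alpha$ for all $n\geq m$. Clause~(2) of Definition~\ref{swallow} then yields the easy half of the conclusion, namely
\[
C_\delta\cap I_\delta(n)\subs e^n_\alpha\qquad\text{for every }n\geq m,
\]
so the work reduces to ensuring that no $\beta\in\nacc(C_\delta)\cap I_\delta(n)$ can be an accumulation point of $e^n_\alpha$.

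Next I would pin down the size of $e^n_\alpha$ for large $n$. Since $\alpha<\mu^+$ and $\mu$ is singular, $\cf(\alpha)$ is a regular cardinal strictly below $\mu$; since $\mu$ is singular of countable cofinality we have $\mu\geq\aleph_\omega$, so in particular $\aleph_1<\mu$. The sequence $\langle f_{\bar{C}}(n):n<\omega\rangle$ is cofinal in $\mu$, so there is some $n_0\geq m$ with $f_{\bar{C}}(n)>\cf(\alpha)+\aleph_1$ for every $n\geq n_0$. Plugging this into clause~(1) of Definition~\ref{swallow} yields
\[
|e^n_\alpha|\leq \cf(\alpha)+f_{\bar{C}}(n)+\aleph_1 = f_{\bar{C}}(n)\qquad\text{whenever }n\geq n_0.
\]

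Finally, I would fix any $n\geq n_0$ and $\beta\in\nacc(C_\delta)\cap I_\delta(n)$. Well-formedness (Definition~\ref{wellformeddef}(4)) supplies $\cf(\beta)>f_{\bar{C}}(n)$, while the previous step supplies $|e^n_\alpha|\leq f_{\bar{C}}(n)$. If $\beta$ lay in $\acc(e^n_\alpha)$, then $e^n_\alpha\cap\beta$ would be cofinal in $\beta$ and force $\cf(\beta)\leq |e^n_\alpha|\leq f_{\bar{C}}(n)$, directly contradicting the cofinality bound on $\beta$. Combined with $\beta\in e^n_\alpha$ from the first step, this gives $\beta\in\nacc(e^n_\alpha)$, as required. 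I do not anticipate a genuine obstacle: Definitions~\ref{wellformeddef} and~\ref{swallow} were engineered so that these two estimates collide in exactly the right direction; the only things to verify are $\cf(\alpha)<\mu$ and $\aleph_1<\mu$, both immediate from our standing hypotheses on $\mu$.
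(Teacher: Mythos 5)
Your argument is correct and matches the paper's proof: choose $n$ large enough that $f_{\bar{C}}(n)$ dominates $\cf(\alpha)+\aleph_1$, note that swallowing puts $\nacc(C_\delta)\cap I_\delta(n)$ inside $e^n_\alpha$, and observe that the cardinality bound $|e^n_\alpha|\le f_{\bar{C}}(n)$ clashes with the cofinality bound $\cf(\beta)>f_{\bar{C}}(n)$ from well-formedness, forcing $\beta\in\nacc(e^n_\alpha)$. You are in fact a touch more careful than the paper, which only asks $\cf(\alpha)\le f_{\bar{C}}(n^*)$ and leaves the absorption of the $\aleph_1$ summand implicit; your explicit requirement $f_{\bar{C}}(n)>\cf(\alpha)+\aleph_1$ makes the estimate airtight.
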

\begin{proof}
Choose $n^*<\omega$ so large that $m<n^*$ and $\cf(\alpha)\leq f_{\bar{C}}(n^*)$.
If $n^*\leq n<\omega$ and $\gamma\in\nacc(C_\delta)\cap I_\delta(n)$, then
$\gamma\in e^n_\alpha$ by Definition~\ref{swallow}, and $\gamma$ cannot
be in $\acc(e^n_\alpha)$ because
\begin{equation}
|e^n_\alpha|\leq\cf(\alpha)+f_{\bar{C}}(n)+\aleph_1<\cf(\gamma).
\end{equation}
\end{proof}

Up until this point in the section, we have been developing the context in which our generalized minimal
walks will take place, and now we turn to their definition.

\begin{definition}
\label{defn1}
Let $\bar{e}$ be a generalized $C$-sequence on some cardinal $\lambda$, and let $s$ be a finite
sequence of natural numbers.  Given $\alpha<\beta<\lambda$, we define
We define $\st(\alpha,\beta,s,\ell)$ --- ``step
$\ell$ on the $s$-walk from $\beta$ to $\alpha$ (along
$\bar{e}$)'' --- by induction on $\ell<\omega$.
\begin{equation*}
\st(\alpha,\beta,s, 0)=\beta,
\end{equation*}
and
\begin{equation*}
\st(\alpha,\beta,s,\ell+1)=
\begin{cases}
\alpha &\text{if $\alpha=\st(\alpha,\beta,s,\ell)$}\\
&\\
\min(e^0_{\st(\alpha,\beta,s,\ell)}\setminus\alpha)&\text{if $\st(\alpha,\beta,s,\ell)>\alpha$ and $\ell\geq\lg(s)$}\\
&\\
\min(e^{s(\ell)}_{\st(\alpha,\beta,s,\ell)}\setminus\alpha) &\text{otherwise.}
\end{cases}
\end{equation*}
Finally, let
\begin{equation*}
n(\alpha,\beta,s)=\text{least $\ell$ such that $\alpha=\st(\alpha,\beta,s,\ell)$.}
\end{equation*}
\end{definition}

In the $C$-sequences used by Todor{\v{c}}evi{\'c}, at each stage of a minimal walk one has a single ladder
to use to make the next step. In our context, there are infinitely many ladders available, and the
parameter $s$ selects the one we use for our next step. Even though there are infinitely many ladders
available, nevertheless there are only finitely many possible destinations --- for given $\alpha<\beta$,
the sequence $\langle e^n_\beta:n<\omega\rangle$ increases with $n$ and therefore the sequence
$\langle \min(e^n_\beta\setminus\alpha):n<\omega\rangle$ is decreasing and hence eventually constant.
This brings us to our next definition.

\begin{definition}
We define $\st^*(\alpha,\beta,\ell)$ --- ``step $\ell$ of the settled walk from $\beta$ to $\alpha$ (along $\bar{e}$)'' ---
by the following recursion:
\begin{equation*}
\st^*(\alpha,\beta,0)=\beta,
\end{equation*}
and
\begin{equation*}
\st^*(\alpha,\beta,\ell+1)=
\begin{cases}
\alpha &\text{if $\alpha=\st^*(\alpha,\beta,\ell)$,}\\
&\\
\lim_{n\rightarrow\infty}(\min(e^n_{\st^*(\alpha,\beta,\ell)}\setminus\alpha) &\text{otherwise.}
\end{cases}
\end{equation*}
We let $n^*(\alpha,\beta)$ denote the least $n$ for which $\st^*(\alpha,\beta, n)=\alpha$.
\end{definition}

The settled walks described above avoid the use of parameters $s$; unfortunately, we seem to need the greater
generality furnished by Definition~\ref{defn1} in our proof of the main result of this paper.
The following straightforward lemma connects the two concepts.

\begin{lemma}
\label{settlelemma}
There is an $m^*<\omega$ such that if $s\in^{\,<\omega}\!\omega$, $\lg(s)\geq n^*(\alpha,\beta)$,
and $s(i)\geq m^*$ for all $i<\lg(s)$, then
\begin{equation*}
\st(\alpha,\beta,s,\ell)=\st^*(\alpha,\beta,\ell)\text{ for all }\ell<n^*(\alpha,\beta).
\end{equation*}
\end{lemma}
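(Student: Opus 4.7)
The plan is to build $m^*$ directly out of the trajectory of the settled walk from $\beta$ to $\alpha$. Let $N = n^*(\alpha,\beta)$, and for each $\ell \leq N$ set $\gamma_\ell = \st^*(\alpha,\beta,\ell)$, so that $\gamma_0 = \beta$, $\gamma_N = \alpha$, and $\gamma_\ell > \alpha$ for each $\ell < N$. The defining recursion for $\st^*$ says precisely that for every such $\ell$, the sequence $\langle \min(e^n_{\gamma_\ell}\setminus\alpha) : n < \omega\rangle$ is eventually equal to $\gamma_{\ell+1}$; this limit is actually attained because $\langle e^n_{\gamma_\ell}:n<\omega\rangle$ is $\subseteq$-increasing, so the values $\min(e^n_{\gamma_\ell}\setminus\alpha)$ form a nonincreasing sequence of ordinals. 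Let $m_\ell$ be least witnessing this stabilization, and put
$$m^* := \max\{m_\ell : \ell < N\},$$
which is well-defined since $N$ is finite.

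The rest is a straightforward induction on $\ell \leq N$ showing that $\st(\alpha,\beta,s,\ell) = \gamma_\ell$ for every $s$ satisfying the hypotheses of the lemma. The base case $\ell = 0$ is immediate. For the inductive step, suppose $\ell < N$ and $\st(\alpha,\beta,s,\ell) = \gamma_\ell$. Since $\ell < N$ we have $\gamma_\ell \neq \alpha$, and since $\lg(s) \geq N$ we have $\ell < \lg(s)$; so the first two clauses of Definition~\ref{defn1} do not fire, and the ``otherwise'' clause gives
$$\st(\alpha,\beta,s,\ell+1) = \min\bigl(e^{s(\ell)}_{\gamma_\ell}\setminus\alpha\bigr).$$
Because $s(\ell) \geq m^* \geq m_\ell$, the right-hand side equals $\gamma_{\ell+1}$, as required.

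There is no serious obstacle here: the lemma simply formalizes the intuition that if every entry of $s$ is large enough, then the parameterized walk always selects a ladder $e^{s(\ell)}_{\gamma_\ell}$ lying \emph{past} the point where the decreasing sequence $\langle \min(e^n_{\gamma_\ell}\setminus\alpha):n<\omega\rangle$ has stabilized, forcing each step to coincide with the corresponding step of the settled walk. The only care needed is to confirm at each inductive step that the ``otherwise'' clause of Definition~\ref{defn1} applies, which is exactly what the assumptions $\ell < N$ and $\lg(s) \geq N$ guarantee.
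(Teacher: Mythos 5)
Your proof is correct and is precisely the natural argument the paper has in mind; the paper itself omits the proof, merely calling the lemma ``straightforward.'' Your choice of $m^*$ as the maximum of the stabilization points along the finite settled walk, followed by an induction showing the ``otherwise'' clause of Definition~\ref{defn1} fires at each step, is exactly the intended reasoning, and your observation that the argument actually gives $\st(\alpha,\beta,s,\ell)=\st^*(\alpha,\beta,\ell)$ for all $\ell\le n^*(\alpha,\beta)$ (hence $n(\alpha,\beta,s)=n^*(\alpha,\beta)$) is a harmless slight strengthening.
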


\noindent We say that $m^*$ {\em settles the walk from $\beta$ to
$\alpha$ (along $\bar{e}$)}, and let $m^*(\alpha,\beta)$ denote the least such $m^*$.

Our discussion now returns to a familiar context --- let $\lambda=\mu^+$ for $\mu$ singular of countable cofinality, and let $S$ be a stationary
subset of $\{\delta<\lambda:\cf(\delta)=\aleph_0\}$.  Further suppose $(\bar{C},\bar{I})$ is a
well-formed $S$-club system swallowed by the generalized $C$-sequence $\bar{e}$.  In the course of this
discussion, we will define several auxiliary functions.

Suppose $\delta\in S$ and $\delta<\beta<\lambda$, and let $m^*=m^*(\delta,\beta)$  be as in Lemma~\ref{settlelemma}.
For
$\ell<n^*(\delta,\beta)-1$, we know $\delta\notin
e^{m^*}_{\st^*(\delta,\beta,\ell)}$ and so if we define
\begin{equation}
\label{gamma*def}
\gamma^*=\gamma^*(\delta,\beta)=\sup\{\max(e^{m^*}_{\st^*(\delta,\beta,\ell)}\cap\delta):\ell<n^*(\delta,\beta)-1\},
\end{equation}
then $\gamma^*$ must be less than $\delta$.

Let $\gamma=\gamma(\delta,\beta)$ denote the ordinal $\st^*(\delta,\beta,
n^*(\delta,\beta)-1$);  our choice of $m^*$ ensures that $\delta$
is in $S\cap e^{m^*}_\gamma$.  An appeal to Lemma~\ref{swallowlemma} tells us there
must exist a least $\bar{m}=\bar{m}(\delta,\beta)<\omega$
such that
\begin{enumerate}
\item $\bar{m}\geq m^*$,
\medskip
\item $\nacc(C_\delta)\cap I_\delta(m)\subs \nacc(e^m_\gamma)$ for all $m\geq
\bar{m}$, and
\medskip
\item if $m\geq \bar{m}$ and $\beta^*\in \nacc(C_\delta)\cap I_\delta(m)$, then
\begin{equation}
\gamma^*<\sup(e^m_\gamma\cap\beta^*)<\beta^*.
\end{equation}
\end{enumerate}

\begin{definition}
Suppose $\delta\in S$, and $\delta<\beta<\lambda$.
For each $m<\omega$, we let
$s(\delta,\beta,m)\in^{\,\omega}\!\omega$ be the sequence of length $n^*(\delta,\beta)$
defined by
\begin{equation*}
s(\delta,\beta,m)[\ell]=
\begin{cases}
m^*(\delta,\beta) &\text{if $\ell<n^*(\delta,\beta)-1$,}\\
m  &\text{if $\ell=n^*(\delta,\beta)-1$}.
\end{cases}
\end{equation*}
\end{definition}

\begin{proposition}
\label{claim3.7}
Suppose $\delta\in S$, $\delta<\beta<\lambda$, and $m\geq\bar{m}(\delta,\beta)$.
For any $\beta^*\in \nacc(C_\delta)\cap I_\delta(m)$, if $\sup(e^m_{\gamma(\delta,\beta)}\cap\beta^*)<\alpha<\beta^*$,
then
\begin{equation}
\st(\alpha,\beta,s(\delta,\beta, m),\ell)=\st^*(\delta,\beta,\ell)\text{ for all
}\ell<n^*(\delta,\beta),
\end{equation}
and
\begin{equation}
\st(\alpha,\beta,s(\delta,\beta, m), n^*(\beta,\delta))=\beta^*.
\end{equation}
\end{proposition}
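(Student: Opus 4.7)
The plan is to prove both assertions simultaneously by induction on $\ell \leq n^*(\delta, \beta)$, showing $\st(\alpha,\beta,s(\delta,\beta,m),\ell)=\st^*(\delta,\beta,\ell)$ for $\ell < n^*(\delta,\beta)$ and then treating the terminal step separately. The base case $\ell=0$ is trivial since both sides equal $\beta$.

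For the inductive step with $\ell+1 < n^*(\delta,\beta)$, note that $s(\delta,\beta,m)[\ell] = m^*(\delta,\beta) =: m^*$. Writing $\sigma_\ell := \st^*(\delta,\beta,\ell)$ and using the inductive hypothesis, what must be verified is the identity
\begin{equation*}
\min(e^{m^*}_{\sigma_\ell} \setminus \alpha) \;=\; \min(e^{m^*}_{\sigma_\ell} \setminus \delta),
\end{equation*}
since the right-hand side equals $\st^*(\delta,\beta,\ell+1)$ by the choice of $m^*$ as a settling index (Lemma~\ref{settlelemma}). This identity holds provided $e^{m^*}_{\sigma_\ell} \cap [\alpha,\delta) = \emptyset$, which is exactly the purpose of $\gamma^*$: by definition (\ref{gamma*def}), $\max(e^{m^*}_{\sigma_\ell} \cap \delta) \leq \gamma^*$ for $\ell < n^*(\delta,\beta)-1$, and the hypothesis on $\alpha$ together with condition (3) defining $\bar m$ give $\alpha > \sup(e^m_\gamma \cap \beta^*) > \gamma^*$, so nothing in $e^{m^*}_{\sigma_\ell}$ lies in $[\alpha,\delta)$.

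For the terminal step, the inductive hypothesis gives $\st(\alpha,\beta,s(\delta,\beta,m),n^*(\delta,\beta)-1) = \sigma_{n^*-1} = \gamma(\delta,\beta)$, and by the definition of $s(\delta,\beta,m)$ the next step is $\min(e^{m}_{\gamma} \setminus \alpha)$ (we remain in the ``otherwise'' branch of Definition~\ref{defn1} because $\gamma > \alpha$ and $\ell < \lg(s)$). Condition (2) in the definition of $\bar m$ guarantees $\beta^* \in \nacc(e^m_\gamma)$, so $\beta^* \in e^m_\gamma$; condition (3) gives $\sup(e^m_\gamma \cap \beta^*) < \alpha < \beta^*$, so no element of $e^m_\gamma$ lies in $[\alpha,\beta^*)$, and therefore $\min(e^m_\gamma \setminus \alpha) = \beta^*$. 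That completes both claims.

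The only substantive content in the argument is keeping straight which properties of $m^*$, $\gamma^*$, $\gamma$, and $\bar m$ are needed at which step; there is no real obstacle, since $\gamma^*$ was defined precisely to make the inductive step work and $\bar m$ was defined precisely to make the terminal step work. The ``hard'' part is really just the bookkeeping needed to confirm that the three conditions on $\bar m$ have been arranged correctly, in particular that $\beta^*$ is a non-accumulation point of $e^m_\gamma$ with $\alpha$ lying strictly between $\sup(e^m_\gamma \cap \beta^*)$ and $\beta^*$.
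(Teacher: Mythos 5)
Your argument is correct and is essentially the same as the paper's: both proceed by induction on the step index, observing that $\alpha>\gamma^*(\delta,\beta)$ forces the $m^*$-step from $\sigma_\ell$ past $\alpha$ to agree with the settled step past $\delta$, and that the final step lands exactly at $\beta^*$ because $\beta^*\in\nacc(e^m_\gamma)$ while $e^m_\gamma\cap[\alpha,\beta^*)=\emptyset$. One tiny slip in attribution: at the terminal step, the inequality $\sup(e^m_\gamma\cap\beta^*)<\alpha<\beta^*$ is the hypothesis of the proposition rather than condition (3) on $\bar m$ (condition (3) is what you correctly use earlier to deduce $\gamma^*<\alpha$ from that hypothesis), but this does not affect the validity of the argument.
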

\begin{proof}
Assume $\alpha$ and $s:=s(\delta,\beta,m)$ are as hypothesized, and suppose
\begin{equation*}
\st(\alpha,\beta,s,\ell)=\st^*(\alpha,\beta,\ell)
\end{equation*}
with $\ell+1<n^*(\delta,\beta)$. Then
\begin{eqnarray*}
\st(\alpha,\beta,s,\ell+1)& = &\min(e^{s(\ell)}_{\st(\alpha,\beta,s,\ell)}\setminus\alpha)\\
   & = & \min(e^{m^*}_{\st^*(\delta,\beta,\ell)}\setminus\alpha)\\
   & = & \min(e^{m^*}_{\st^*(\delta,\beta,\ell)}\setminus\delta)\,\,\text{(as $\alpha>\gamma^*(\delta,\beta)$)}\\
   & = & \st^*(\delta,\beta,\ell+1).
\end{eqnarray*}
In particular, we know
\begin{equation*}
\st(\alpha,\beta,s,n^*(\delta,\beta)-1)=\st^*(\delta,\beta,n^*(\delta,\beta)-1)=\gamma(\delta,\beta).
\end{equation*}

We now use Definition~\ref{defn1} to compute
\begin{eqnarray*}
\st(\alpha,\beta,s, n^*(\delta,\beta)) & = &
\min(e^{s(n^*(\delta,\beta)-1)}_{\st^*(\alpha,\beta,s, n^*(\delta,\beta)-1)}\setminus \alpha) \\
   & = & \min(e^m_{\gamma(\delta,\beta)}\setminus\alpha)\\
   & = & \beta^*,
   \end{eqnarray*}
where the last equality holds because $\beta^*\in e^m_{\gamma(\delta,\beta)}$ and
\begin{equation*}
\sup(e^m_{\gamma(\delta,\beta)}\cap\beta^*)<\alpha<\beta^*.
\end{equation*}
\end{proof}

The preceding argument certainly benefits from a description in English. Given $\delta<\beta$ with $\delta\in S$,
if we define $\gamma^*$ as in (\ref{gamma*def}), then the usual sort of minimal walks argument guarantees
that for any $\alpha$ in the interval $(\gamma^*,\delta)$, the ``$m^*$--walk'' (i.e., the walk obtained by
always stepping in the $m^*$th ladder) from $\beta$ to $\alpha$ will agree with the $m^*$--walk from $\beta$ to
$\delta$ until the last step before the latter arrives at $\delta$.  Varying the ladder used for the next step
(i.e., changing the particular value of $m$) gives us a way
of gaining control over one more step, provided we have a little more information on the ordinal $\alpha$.

Notice that even though we assume $m\geq m^*$,
we cannot simply replace $s(\delta,\beta, m)$ with a sequence of the same length that is constant with
value $m$ --- doing this change has no effect on our steps in the initial portion of the walk, but it might increase
the value of $\gamma^*$ so that it exceeds the particular $\beta^*$ we were aiming for,
 and then the argument no longer works (although something could be said if we were working with $\delta$ of uncountable cofinality).
  Thus, we seem to be stuck with sequences $s$ that are not constant if we want our proof to go through.

\section{The main theorem}

Throughout this section, we will be operating in the following general context:

\begin{itemize}
\item $\lambda=\mu^+$ for $\mu$ singular of cofinality $\aleph_0$
\medskip
\item $S$ is a stationary subset of $\{\delta<\lambda:\cf(\delta)=\aleph_0\}$
\medskip
\item $(\bar{C},\bar{I})$ is a well-formed $S$-club system
\medskip
\item $\bar{e}=\langle e^n_\alpha:n<\omega,\alpha<\lambda\rangle$ is a generalized $C$-sequence that swallows $(\bar{C},\bar{I})$
\medskip
\item $(\vec{\mu},\vec{f})$ is a scale for $\mu$ with $\mu_0>\aleph_0$.
\medskip
\item $\Gamma:[\lambda]^2\rightarrow\omega$ is the function defined (for $\alpha<\beta$) by
\begin{equation}
\Gamma(\alpha,\beta) = \max\{i<\omega: f_\beta(i)\leq f_\alpha(i)\}.
\end{equation}
\item $\langle s_i:i<\omega\rangle$ is an enumeration of $^{<\omega}\!\omega$ in which each element appears infinitely often
\medskip
\item $x=\{\lambda,\mu,S,(\bar{C},\bar{I}),\bar{e},(\vec{\mu},\vec{f}), \langle s_i:i<\omega\rangle\}$ (so $x$
codes all of the parameters listed previously)
\medskip
\item $\mathfrak{A}$ is a structure of the form $\langle H(\chi), \in, <_\chi\rangle$ for some sufficiently large
regular cardinal~$\chi$ and well-ordering $<_\chi$ of $H(\xi)$.
\end{itemize}

We apologize to the reader for the preceding bare list of assumptions --- writing all of the above out results in
a dramatic loss of clarity.

\begin{definition}
\label{colordef}
We define a coloring $c:[\lambda]^2\rightarrow\lambda$ as follows:
\vskip.2in
\noindent For $\alpha<\beta<\lambda$, let
\begin{equation}
s^*(\alpha,\beta)=s_{\Gamma(\alpha,\beta)}
\end{equation}
Next, define
\begin{equation}
k(\alpha,\beta) = \text{least $\ell\leq n(\alpha,\beta)$ such that $\Gamma(\alpha,\st(\alpha,\beta,s^*(\alpha,\beta),\ell))\neq\Gamma(\alpha,\beta)$.}
\end{equation}
Finally, let
\begin{equation}
c(\alpha,\beta)=\st(\alpha,\beta, s^*(\alpha,\beta), k(\alpha,\beta)).
\end{equation}
\end{definition}

The computation of $c(\alpha,\beta)$ seems more reasonable when written out in English --- we start
by computing $\Gamma(\alpha,\beta)$ and use this to select the element $s^*$ of $^{<\omega}\!\omega$
that will guide our walk.  We then walk from $\beta$ to $\alpha$ using $s^*$, and we stop when we reach
a point where ``$\Gamma$ changes''.  This stopping point is the value of $c(\alpha,\beta)$.
The same basic idea is exploited in~\cite{nsbpr}; the current version is complicated by our need for
the parameter $s^*$.

\begin{theorem}
\label{mainthm}
If $\langle t_\alpha:\alpha<\lambda\rangle$ is a pairwise disjoint sequence of finite subsets of $\lambda$
and $A$ is an unbounded subset of $\lambda$, then for $\id_p(\bar{C},\bar{I})$-almost all $\beta^*<\lambda$,
we can find $\alpha<\lambda$ and $\beta\in A$ such that
\begin{equation}
c(\epsilon,\beta) = \beta^*\text{ for all }\epsilon\in t_\alpha.
\end{equation}
\end{theorem}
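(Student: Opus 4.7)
The plan is to argue by contradiction. Suppose the set
\begin{equation*}
B=\{\beta^*<\lambda:\text{no }\alpha<\lambda,\ \beta\in A\text{ satisfies }c(\epsilon,\beta)=\beta^*\text{ for every }\epsilon\in t_\alpha\}
\end{equation*}
is $\id_p(\bar{C},\bar{I})$-positive. Unwinding the definition of $\id_p(\bar{C},\bar{I})$ together with Proposition~\ref{idpprop}, positivity of $B$ says that for every closed unbounded $E\subseteq\lambda$ there is $\delta\in S\cap E$ such that for every $\beta_0<\delta$ and every $\tau<\mu$ one can locate $\beta^*\in B\cap E\cap\nacc(C_\delta)\cap(\beta_0,\delta)$ with $\cf(\beta^*)>\tau$. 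I would fix a continuous increasing chain $\langle M_\zeta:\zeta<\lambda\rangle$ of elementary submodels of $\mathfrak{A}$ of size less than $\lambda$, each containing the parameters $x$, the set $A$, and the sequence $\langle t_\alpha:\alpha<\lambda\rangle$, and take $E$ to be the intersection of $\{\zeta<\lambda:M_\zeta\cap\lambda=\zeta\}$ with the club guaranteed by Lemma~\ref{scalelemma1}; positivity then yields $\delta\in S\cap E$ with the stated richness properties.

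Having fixed such a $\delta$, I would pick $\beta\in A$ with $\beta>\delta$ and import the walk data of Section~3: $m^*=m^*(\delta,\beta)$, $n^*=n^*(\delta,\beta)$, $\gamma=\gamma(\delta,\beta)$, $\gamma^*=\gamma^*(\delta,\beta)$, and $\bar{m}=\bar{m}(\delta,\beta)$. The engine is Proposition~\ref{claim3.7}: for each $m\geq\bar{m}$ and each $\beta^*\in\nacc(C_\delta)\cap I_\delta(m)$, every $\alpha$ in the open interval $J:=(\sup(e^m_\gamma\cap\beta^*),\beta^*)$ has the property that the walk from $\beta$ to $\alpha$ guided by $s(\delta,\beta,m)$ agrees with the settled walk from $\beta$ to $\delta$ on steps $0,1,\dots,n^*-1$ and then lands on $\beta^*$ at step $n^*$. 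Because $\langle s_i:i<\omega\rangle$ lists $^{<\omega}\omega$ with infinite multiplicity, one can pick $i^\dagger<\omega$ arbitrarily large with $s_{i^\dagger}=s(\delta,\beta,m)$.

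To force $c(\epsilon,\beta)=\beta^*$ via this walk, three conditions on $\epsilon$ must hold: (a) $\Gamma(\epsilon,\beta)=i^\dagger$, so that the guide selected by the definition of $c$ is exactly $s_{i^\dagger}=s(\delta,\beta,m)$; (b) $\Gamma(\epsilon,\st^*(\delta,\beta,\ell))=i^\dagger$ for $\ell=1,\dots,n^*-1$, so that the walk does not halt before step $n^*$; and (c) $\Gamma(\epsilon,\beta^*)\neq i^\dagger$, so that the walk does halt at $\beta^*$. Conditions (a) and (b) translate (with the convention $\st^*(\delta,\beta,0)=\beta$) into the scale inequalities $f_{\st^*(\delta,\beta,\ell)}(i^\dagger)\leq f_\epsilon(i^\dagger)$ together with $f_\epsilon(j)<f_{\st^*(\delta,\beta,\ell)}(j)$ for all $j>i^\dagger$, taken uniformly over the finitely many $\ell$ in question; condition (c) amounts to $f_{\beta^*}(j)\leq f_\epsilon(j)$ for some $j>i^\dagger$, which is consistent with (a)/(b) because $\beta^*<\st^*(\delta,\beta,\ell)$ for each relevant $\ell$. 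Choosing $m$ large, then $\beta^*\in B\cap E\cap\nacc(C_\delta)\cap I_\delta(m)$ of very large cofinality, and then $i^\dagger$ large enough to dominate every index extracted from the walk, Lemma~\ref{scalelemma1} applied at $\beta^*$ produces an unbounded $G\subseteq J$ of ordinals $\epsilon<\beta^*$ satisfying (a)--(c).

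It remains to find $\alpha<\lambda$ with $t_\alpha\subseteq G$, whereupon $c(\epsilon,\beta)=\beta^*$ for every $\epsilon\in t_\alpha$ contradicts $\beta^*\in B$. Since $\langle t_\alpha:\alpha<\lambda\rangle$ and the relevant parameters lie in $M_\delta$, the elementarity of $M_\delta$ combined with the unboundedness of $G$ in $\beta^*$ and the pairwise disjointness of $\langle t_\alpha\rangle$ will supply such an $\alpha$. The hard part, which I expect to be the main obstacle, is the uniform handling of the finite set $t_\alpha$ in condition (c): one must arrange that \emph{every} $\epsilon\in t_\alpha$ simultaneously satisfies $\Gamma(\epsilon,\beta^*)\neq i^\dagger$ with one common index $i^\dagger$, despite the fact that the various $f_\epsilon$ for $\epsilon\in t_\alpha$ need not be at all coordinated. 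The cleanest remedy is to formulate the ``good $\epsilon$'' set inside an elementary submodel that already sees $\langle t_\alpha\rangle$ and to appeal to reflection, so that the existence of one good $\alpha$ reflects from the abundance of good $\epsilon$'s guaranteed by Lemma~\ref{scalelemma1}.
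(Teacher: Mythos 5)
Your scaffolding matches the paper's: contradiction with an $\id_p(\bar{C},\bar{I})$-positive set $B$, an internally approachable chain $\langle M_\zeta\rangle$ with club $E$, a $\delta\in S\cap E$ where $E\cap B\cap C_\delta\notin I_\delta$, a $\beta\in A$ above $\delta$, the walk apparatus from Section~3 with Proposition~\ref{claim3.7}, and the choice of $i^\dagger$ large with $s_{i^\dagger}=s(\delta,\beta,m)$. Conditions (a)--(c) are also the right translation of ``$c(\epsilon,\beta)=\beta^*$'' into $\Gamma$-inequalities. But the step you flag as ``the hard part'' is exactly where the proof lives, and the remedy you sketch --- produce an unbounded set $G$ of \emph{good $\epsilon$'s} and then find $\alpha$ with $t_\alpha\subseteq G$ --- does not go through as stated. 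The set $G$ is just an unbounded subset of $\beta^*$ cut out by $\Gamma$-conditions against a handful of fixed ordinals; an unbounded $G$ gives no purchase at all on which of the pairwise disjoint finite sets $t_\alpha$ it might contain, and the vague appeal to ``reflection inside an elementary submodel'' doesn't specify a mechanism that would force any $t_\alpha$ entirely into $G$.

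The paper's actual resolution is a concrete uniformization device you never name. For the \emph{lower} bound (your conditions (a)/(b) at coordinate $i^\dagger$), it replaces the scale $\vec{f}$ by the pointwise minimum $f^{\min}_\alpha(i)=\min\{f_\epsilon(i):\epsilon\in t_\alpha\}$; this is still a scale, Lemma~\ref{scalelemma1} applies to it, and the resulting statement is about indices $\alpha$ directly, giving $f_\epsilon(i)\geq f^{\min}_\alpha(i)>\eta$ for every $\epsilon\in t_\alpha$ at once. For the \emph{upper} bound at coordinates $j>i$, one takes the countable Skolem hull $M$ of $\{x,\langle t_\alpha\rangle,A,\beta^*\}$, sets $N=\Sk_{\mathfrak{A}}(M\cup\mu_i)$, and uses the fact that $\alpha\in N$ implies $t_\alpha\subseteq N$ (it is finite), so $f_\epsilon(j)\leq\Ch_N(j)$; Lemma~\ref{skolemhulllemma} then says $\Ch_N(j)=\Ch_M(j)$ for $j>i$, and since $M\in M_\delta$ one has $\Ch_M<^*f_\delta\leq^*f_{\st^*(\delta,\beta,\ell)}$. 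That pair of tricks --- the $f^{\min}$ scale and the $\Ch_N=\Ch_M$ stabilization --- is precisely how the paper gets all of $t_\alpha$ to satisfy (a)--(c) simultaneously; without it, your argument has a real gap. (A small additional point you omit: the paper assumes WLOG $\alpha<\min(t_\alpha)$ so that $t_\alpha\subseteq(\sup(e^m_{\gamma}\cap\beta^*),\beta^*)$ once $\alpha$ is in that interval, which is needed before Proposition~\ref{claim3.7} can be invoked for each $\epsilon\in t_\alpha$.)
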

\begin{proof}
By way of contradiction, suppose $\langle t_\alpha:\alpha<\lambda\rangle$ and $A\subs\lambda$
form a counterexample (without loss of generality, $\alpha<\min(t_\alpha)$).  Then there is an $\id_p(\bar{C},\bar{I})$-positive set $B$ such that
for each $\beta^*\in B$, there are no $\alpha<\lambda$ and $\beta\in A$ such that $c\restr t_\alpha\times\{\beta\}$
is constant with value $\beta^*$.

Let $\langle M_\xi:\xi<\lambda\rangle$ be a $\lambda$-approximating
sequence over $\{x, \langle t_\alpha:\alpha<\lambda\rangle, A\}$, and let
$E$ be the closed unbounded set defined by
\begin{equation*}
E:=\{\delta<\lambda: \delta=M_\delta\cap\lambda\}.
\end{equation*}
By our assumptions, we can choose $\delta\in E\cap S$ such that
\begin{equation}
E\cap B\cap C_\delta\notin I_\delta.
\end{equation}
Finally, let $\beta$ be some element of $A$ greater than $\delta$.

The discussion preceding Proposition~\ref{claim3.7} applies to $\delta$ and $\beta$,
so we can safely speak of $\bar{m}(\delta,\beta)$ and the other functions defined there.
Since $E\cap B\cap C_\delta\notin I_\delta$,  we know that $E\cap B$ must contain members
 of $\nacc(C_\delta)\cap I_\delta(n)$ for arbitrarily large~$n$. Thus,
we can find $\beta^*\in E\cap B$ such that $\beta^*\in \nacc(C_\delta)\cap I_\delta(m)$
for some $m\geq \bar{m}(\delta,\beta)$. In particular,
\begin{equation}
\beta^*\in\nacc(e^m_{\gamma(\delta,\beta)})
\end{equation}
by the definition $\bar{m}(\delta,\beta)$.

Let $s^*= s(\delta,\beta, m)$ for this particular value of $m$. We know
\begin{equation}
\label{eqn15}
\sup(e^m_{\gamma(\delta,\beta)}\cap\beta^*)<\beta^*,
\end{equation}
and Proposition~\ref{claim3.7} can now be brought into play --- if $\epsilon$ lies
in the interval determined by~(\ref{eqn15}), then we know that the $s^*$-walk
from $\beta$ to $\epsilon$ will pass through $\beta^*$ and in addition, we know
exactly what the walk looks like up to that point.

Since $\beta^*\in E$ and $\langle t_\alpha:\alpha<\lambda\rangle\in M_0$, we note
\begin{equation}
\alpha<\beta^*\Longrightarrow t_\alpha\subs\beta^*.
\end{equation}
We assumed $\alpha<\min(t_\alpha)$, and so we conclude
\begin{equation}
\sup(e^m_{\gamma(\delta,\beta)}\cap\beta^*)<\alpha<\beta^* \Longrightarrow t_\alpha\subs (\sup(e^m_{\gamma(\delta,\beta)}\cap\beta^*),\beta^*).
\end{equation}
We now prove the following claim.

\begin{claim}
For all sufficiently large $i<\omega$, there are unboundedly many $\alpha<\beta^*$ such that
\begin{equation}
\label{eqn1}
\Gamma(\epsilon,\st(\alpha,\beta, s^*,\ell))=i\text{ for all }\ell<n^*(\delta,\beta)\text{ and $\epsilon\in t_\alpha$,}
\end{equation}
while
\begin{equation}
\label{eqn2}
\Gamma(\epsilon,\beta^*)>i\text{ for all }\epsilon\in t_\alpha.
\end{equation}
\end{claim}
\begin{proof}
Let $M$ be the Skolem hull (in $\mathfrak{A}$) of $\{x, \langle t_\alpha:\alpha<\lambda\rangle, A, \beta^*\}$.
Since $M$ is countable and the $\mu_i$ are uncountable, it follows that
\begin{equation}
\Ch_M(i)=\sup(M\cap\mu_i)\text{ for all }i<\omega,
\end{equation}
where $\Ch_M$ is the characteristic function of $M$ from Definition~\ref{chardef}.

For each $\alpha<\lambda$, let $f^{\min}_\alpha$ be the function with domain $\omega$ defined as
\begin{equation}
f^{\min}_\alpha(i)= \min\{f_\epsilon(i):\epsilon\in t_\alpha\}.
\end{equation}
It is easy to see that $(\vec{\mu},\langle f_\alpha^{\min}:\alpha<\lambda\rangle)$ is a
scale for $\mu$,
and this scale is also an element of $M_{\beta^*}$.  Since $\beta^*$ is an element of every
closed unbounded subset of $\lambda$ that is an element of $M_{\beta^*}$, we can appeal to Lemma~\ref{scalelemma1}
and conclude that there is an $i_0<\omega$ such that whenever $i_0\leq i<\omega$,
\begin{equation}
\label{eqn4}
(\forall\eta<\mu_i)(\forall\nu<\mu_{i+1})(\exists^*\alpha<\beta^*)[f^{\min}_\alpha(i)>\eta\wedge f^{\min}_\alpha(i+1)>\nu.]
\end{equation}

Next, note that $M$ is an element of $M_\delta$, as the required Skolem hull  can be computed in
$M_\delta$ using the model $M_{\beta^*+1}$.
This means that the function $\Ch_M$ is in $M_\delta$ and therefore
\begin{equation}
\Ch_M <^* f_\delta.
\end{equation}
Thus, we can  find $i_1<\omega$ such that
\begin{equation}
\label{eqn8}
\Ch_M\restr [i_1,\omega)< f_{\st(\delta,\beta,s^*,\ell)}\restr [i_1,\omega)\text{ for all }\ell<n^*(\delta,\beta).
\end{equation}
Finally, choose $i_2$ so large that
\begin{equation}
\label{eqn3}
\cf(\beta^*)<\mu_{i_2},
\end{equation}
and let $i^*=\max\{i_0, i_1, i_2\}$.

We claim now that (\ref{eqn1}) and (\ref{eqn2}) holds for any $i\geq i^*$. Given such an~$i$,
we define
\begin{eqnarray*}
N &= &\Sk_{\mathfrak{A}}(M\cup\mu_i)\\
\eta &= &\sup\{f_{\st(\delta,\beta,s^*,\ell)}(i):\ell<n^*(\delta,\beta)\}, \text{ and }\\
\nu &= &f_{\beta^*}(i+1).
\end{eqnarray*}

We know~(\ref{eqn4}) holds in the model~$N$, and since both $\eta$ and $\nu$ (defined above) are
in the model $N$ (as $\mu_i\subs N$ and $f_{\beta^*}\in N$), it follows that
\begin{equation}
N\models (\exists^*\alpha<\beta^*)[f^{\min}_\alpha(i)>\eta\wedge f^{\min}_\alpha(i+1)>\nu]
\end{equation}
The definition of $N$ together with (\ref{eqn3}) imply that $N\cap\beta^*$ is unbounded in~$\beta^*$, and so
 we can conclude that the set of $\alpha\in N\cap\beta^*$ for which
\begin{equation}
\label{eqn5}
f^{\min}_\alpha(i)>\eta\text{ and } f^{\min}_\alpha(i+1)>\nu
\end{equation}
is unbounded in $\beta^*$.

Suppose now that $\alpha<\beta^*$ satisfies~(\ref{eqn5}). If in addition $\sup(e^m_{\gamma(\delta,\beta)}\cap\beta^*)<\alpha$, then given
$\epsilon\in t_\alpha$, we know
\begin{equation}
\sup(e^m_{\gamma(\delta,\beta)}\cap\beta^*)<\alpha\leq \epsilon <\beta^*.
\end{equation}
An appeal to Proposition~\ref{claim3.7} tells us
\begin{equation}
\st(\epsilon,\beta, s^*, \ell)=\st^*(\delta,\beta,\ell)\text{ for all }\ell<n^*(\delta,\beta),
\end{equation}
and
\begin{equation}
\st(\epsilon,\beta, s^*, n^*(\beta, \delta))=\beta^*.
\end{equation}

Now it should be clear that $\Gamma(\epsilon,\beta^*)\geq i+1$ because of our choice of $\nu$.
Given $\ell<n^*(\beta,\delta)$, we know
\begin{equation}
f_{\st(\epsilon,\beta,s^*,\ell)}(i)=f_{\st^*(\delta,\beta,\ell)}(i)\leq\eta < f_\alpha^{\min(i)}\leq f_\epsilon(i).
\end{equation}
On the other hand, given $j>i$ we know (from Lemma~\ref{skolemhulllemma}) that
\begin{equation}
\Ch_M(j)=\Ch_N(j)=\sup(N\cap\mu_j),
\end{equation}
and since $\epsilon\in N$ (as $\epsilon\in t_\alpha\in N$ and $t_\alpha$ is finite), it follows from~(\ref{eqn8}) that
\begin{equation}
f_\epsilon(j)\leq\Ch_N(j)=\Ch_M(j)<f_{\st^*(\delta,\beta,\ell)}(j)=f_{\st(\epsilon,\beta,s^*,\ell)}(j)
\end{equation}
for all $\ell<n^*(\delta,\beta)$. The statement~(\ref{eqn1}) now follows immediately and with it the claim.
\end{proof}

We are now in a position to obtain a contradiction. First, use the preceding claim
to fix an $\bar{i}$ such that
such that
\begin{equation}
s_{\bar{i}}= s^*,
\end{equation}
and for which there are unboundedly many $\alpha\leq\beta^*$ satisfying both~(\ref{eqn1}) and~(\ref{eqn2}).
In particular, we can fix an $\alpha<\beta^*$ in $A$ satisfying~(\ref{eqn1}) and~(\ref{eqn2}) such that
\begin{equation}
\sup(e^m_{\gamma(\delta,\beta)}\cap\beta^*)<\alpha<\beta^*;
\end{equation}
we now prove
\begin{equation}
c(\epsilon,\beta)=\beta^*\text{ for all $\epsilon\in t_\alpha$},
\end{equation}
and this will yield the desired contradiction.

Given $\epsilon\in t_\alpha$, from (\ref{eqn1}), we conclude $\Gamma(\epsilon,\beta)=\bar{i}$, and hence
\begin{equation}
s^*(\epsilon,\beta)=s^*.
\end{equation}
For $\ell<n^*(\delta,\beta)$, we know
\begin{equation}
\Gamma(\epsilon,\st(\epsilon,\beta, s(\epsilon,\beta),\ell))=\Gamma(\epsilon,\st^*(\delta,\beta,\ell))=\bar{i}=\Gamma(\epsilon,\beta),
\end{equation}
while
\begin{equation}
\Gamma(\epsilon,\st(\alpha,\beta, s(\epsilon,\beta),n^*(\delta,\beta)))=\Gamma(\epsilon,\beta^*)>\bar{i}.
\end{equation}
Thus
\begin{equation}
k(\epsilon,\beta)=n^*(\delta,\beta),
\end{equation}
and
\begin{equation}
c(\epsilon,\beta)=\st(\epsilon,\beta,s^*(\epsilon,\beta), k(\epsilon,\beta))=\st(\epsilon,\beta, s^*, n^*(\delta,\beta))=\beta^*,
\end{equation}
as required.

The contradiction is immediate as no such $\alpha$ and $\beta$ are supposed to exist for our choice of~$\beta^*$.
\end{proof}

\section{Conclusions}

We now use Theorem~\ref{mainthm} to draw some conclusions concerning negative square-brackets partition relations and
their connection with saturation-type properties of club-guessing ideals.
These results are framed in terms of successors of singular cardinals of countable cofinality because stronger results
are known for the uncountable cofinality case (see \cite{Sh:365},  \cite{535}, and \cite{nsbpr}).  These results are
also weaker than those claimed for the countable cofinality case in Section~4 of~\cite{Sh:365} --- as mentioned before, there is a problem in the proof of Lemma~4.2(4) on page~162; the present paper provides a partial rescue.

Let us recall the following definitions:

\begin{definition}
Let $I$ be an ideal on some set $A$, and let $\sigma$ and $\tau$ be cardinals, with $\tau$ regular.
\begin{enumerate}
\item The ideal $I$ is weakly $\sigma$-saturated if $A$ cannot be partitioned into $\sigma$ disjoint $I$-positive
sets, i.e., there is no function $\pi:A\rightarrow\sigma$ such that
\begin{equation*}
\pi^{-1}(i)\notin I
\end{equation*}
for all $i<\sigma$.
\sk
\item The ideal $I$ is $\tau$-indecomposable if $\bigcup_{i<\tau}A_i\in I$ whenever $\langle A_i:i<\tau\rangle$ is an {\em increasing} sequence of sets from $I$.
\end{enumerate}
\end{definition}

\begin{theorem}
\label{thm9}
Suppose $\lambda=\mu^+$ for $\mu$ singular of countable cofinality, and let $\theta\leq\lambda$.
If there is a well-formed pair $(\bar{C},\bar{I})$ for which the ideal $\id_p(\bar{C},\bar{I})$ fails to be weakly $\theta$-saturated, then there is a coloring $c^*:[\lambda]^2\rightarrow \theta$ such that for any two unbounded subsets $A$ and $B$ of $\lambda$ and any $\varsigma<\theta$, there are $\alpha\in A$ and $\beta\in B$ with $\alpha<\beta$ and
\begin{equation}
c^*(\alpha,\beta)=\varsigma.
\end{equation}
In particular, $\lambda\nrightarrow[\lambda]^2_\theta$.
\end{theorem}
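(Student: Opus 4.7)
The plan is to convert the hypothesis into a partition of $\lambda$ and then reduce everything to Theorem~\ref{mainthm}. Since $\id_p(\bar C,\bar I)$ fails to be weakly $\theta$-saturated, fix $\pi:\lambda\to\theta$ with $B_\varsigma:=\pi^{-1}(\varsigma)\notin\id_p(\bar C,\bar I)$ for every $\varsigma<\theta$, and define
\[
c^*(\alpha,\beta):=\pi(c(\alpha,\beta)),
\]
where $c$ is the coloring from Definition~\ref{colordef}. I claim $c^*$ has the required property.

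Given unbounded $A,B\subseteq\lambda$ and $\varsigma<\theta$, enumerate $A$ in increasing order as $\langle a_\xi:\xi<\lambda\rangle$ and set $t_\xi:=\{a_{\xi+\omega}\}$. This produces a pairwise disjoint family of finite subsets of $A$ (with $\xi<\min(t_\xi)$, which simplifies bookkeeping but is not essential). Applying Theorem~\ref{mainthm} to $\langle t_\xi:\xi<\lambda\rangle$, with the unbounded set $B$ playing the role of ``$A$'' there, yields a null set $N\in\id_p(\bar C,\bar I)$ such that for every $\beta^*\in\lambda\setminus N$ we can find some $\xi<\lambda$ and some $\beta\in B$ with $c(\epsilon,\beta)=\beta^*$ for every $\epsilon\in t_\xi$. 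Because $B_\varsigma\notin\id_p(\bar C,\bar I)$ while $N\in\id_p(\bar C,\bar I)$, the set $B_\varsigma\setminus N$ is nonempty; pick any $\beta^*$ in this difference. The witnessing $\epsilon=a_{\xi+\omega}\in A$ and $\beta\in B$ then satisfy $c^*(\epsilon,\beta)=\pi(\beta^*)=\varsigma$, while $\epsilon<\beta$ is implicit in the fact that $c(\epsilon,\beta)$ was evaluated. This establishes the main assertion.

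For the ``in particular'' clause, any $H\subseteq\lambda$ with $|H|=\lambda$ is automatically unbounded, so applying the main assertion with $A=B=H$ shows that every $\varsigma<\theta$ appears as $c^*(\alpha,\beta)$ for some $\alpha,\beta\in H$ with $\alpha<\beta$; hence $\ran(c^*\restr[H]^2)=\theta$, which is precisely $\lambda\nrightarrow[\lambda]^2_\theta$. There is no serious technical obstacle here---Theorem~\ref{mainthm} has already done all the combinatorial work, and the proof amounts to exploiting the positivity of the fibers $B_\varsigma$ against the smallness of the exceptional set produced by that theorem.
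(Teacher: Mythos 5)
Your proof is correct and follows the same route as the paper: define $c^*=\pi\circ c$ and apply Theorem~\ref{mainthm} with singleton $t$-sets so that some $\beta^*$ in the chosen positive fiber of $\pi$ is hit. Your re-indexing of $A$ via $t_\xi=\{a_{\xi+\omega}\}$ is a slightly more careful packaging of what the paper writes informally as applying the theorem to $\langle\{\alpha\}:\alpha\in A\rangle$, but the underlying argument is the same.
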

\begin{proof}
Suppose there is a function $\pi:\lambda\rightarrow\kappa$ such that $\pi^{-1}(\{\epsilon\})$ is $\id_p(\bar{C},\bar{I})$-positive
for each $\epsilon<\kappa$.  Define the function $c^*:[\lambda]^2\rightarrow\kappa$ by
\begin{equation}
c^*(\alpha,\beta)=\pi(c(\alpha,\beta)).
\end{equation}
Given $A$ and $B$ unbounded in $\lambda$ and $\varsigma<\kappa$, since $\pi^{-1}(\{\varsigma\})$ is $\id_p(\bar{C},\bar{I})$-positive
 we can apply Theorem~\ref{mainthm} (with $\langle\{\alpha\}:\alpha\in A\rangle$ in place of $\langle t_\alpha:\alpha<\lambda\rangle$)
to find $\alpha\in A$ and $\beta\in B$ such that
\begin{equation}
c(\alpha,\beta)\in\pi^{-1}(\{\varsigma\}),
\end{equation}
and this suffices.
\end{proof}

We state the following corollary in such a way that it covers all successors of singular cardinals, though
we remind the reader that stronger results are known (see \cite{nsbpr}) in the situation where the cofinality
of $\mu$ is uncountable.

\begin{corollary}
Let $\mu$ be a singular cardinal. If $\mu^+\rightarrow[\mu^+]^2_{\mu^+}$,
then there is an ideal $I$ on $\mu^+$ such that
\begin{enumerate}
\item $I$ is a proper ideal extending the non-stationary ideal on $\mu^+$,
\medskip
\item $I$ is $\cf(\mu)$-complete
\medskip
\item $I$ is $\tau$-indecomposable for all uncountable regular $\tau$ with $\cf(\mu)<\tau<\mu$, and
\medskip
\item $I$ is weakly $\theta$-saturated for some $\theta<\mu$.
\end{enumerate}
\end{corollary}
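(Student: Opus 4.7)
The plan is to split the verification into two cofinality cases. For $\cf(\mu)$ uncountable, I would invoke the stronger machinery of \cite{nsbpr}, under which the hypothesis already delivers an ideal with all four required properties (and in fact a much sharper saturation bound). The substantive new content is the case $\cf(\mu)=\aleph_0$, and the rest of the plan concentrates there.

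For $\cf(\mu)=\aleph_0$, I would fix a strictly increasing sequence $\langle \mu_n:n<\omega\rangle$ of regular cardinals cofinal in $\mu$, a stationary $S\subseteq\{\delta<\mu^+:\cf(\delta)=\aleph_0\}$, and invoke Proposition~2.12 to produce a well-formed pair $(\bar C,\bar I)$ measured by $\langle \mu_n\rangle$. I would then take $I=\id_p(\bar C,\bar I)$. Properties (1) and (2) are essentially immediate: propriety is Proposition~2.11; the non-stationary ideal is contained in $\id_p$ because any club disjoint from a given non-stationary set witnesses membership; and $\cf(\mu)$-completeness for $\cf(\mu)=\aleph_0$ reduces to finite additivity. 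For property~(3), I would verify $\tau$-indecomposability for every uncountable regular $\tau$ with $\aleph_0<\tau<\mu$ directly from the shape of the generating sets of each $I_\delta$: given an increasing sequence of $\id_p$-sets with witnessing clubs $E_i$, the intersection $\bigcap_{i<\tau}E_i$ is still a club (since $\tau<\mu^+$), and the corresponding $\tau$-union inside each $I_\delta$ remains in $I_\delta$ because the supremum of $\tau$ many ordinals below $\delta$ stays below $\delta$ (as $\cf(\delta)=\aleph_0<\tau$), with the same argument applied to the parameter bounds below $\mu$.

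The heart of the matter is property (4). A first application of Theorem~\ref{thm9} in its contrapositive form, with $\theta=\mu^+$, uses the hypothesis $\mu^+\rightarrow[\mu^+]^2_{\mu^+}$ to conclude that $I$ is weakly $\mu^+$-saturated; equivalently, no partition of $\mu^+$ into $I$-positive pieces can have size greater than $\mu$. The remaining task is to rule out a partition of size exactly $\mu$, which (since $\mu$ is a limit cardinal) is equivalent to the desired ``weakly $\theta$-saturated for some $\theta<\mu$'' formulation. I would argue by contradiction: assume a partition $\{A_i:i<\mu\}$ of $\mu^+$ into $I$-positive sets, and use it together with the coloring $c$ of Definition~\ref{colordef} from the Main Theorem. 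By the Main Theorem, $c[A]^2$ is $\id_p$-cofinite for every unbounded $A\subseteq\mu^+$, so composing with the partition indicator $\pi:\mu^+\to\mu$ gives a bad coloring $\pi\circ c:[\mu^+]^2\to\mu$ which surjects on every square of an unbounded set (this is exactly the content of Theorem~\ref{thm9} applied with $\theta=\mu$). The main obstacle — and the place where the technical work must be done — is to upgrade this to a bad coloring $c^*:[\mu^+]^2\to\mu^+$ witnessing $\mu^+\nrightarrow[\mu^+]^2_{\mu^+}$, thereby contradicting the hypothesis. I would carry out this upgrade by using the scale $(\vec\mu,\vec f)$ to encode the tuple $\langle \pi_n\circ c(\alpha,\beta):n<\omega\rangle\in\prod_n\mu_n$ (coming from refined partitions $\pi_n:\mu^+\to\mu_n$) as a single ordinal of $\mu^+$, exploiting the $I$-cofiniteness of $c[A]^2$ and the cofinality of the scale to ensure that every ordinal $\gamma<\mu^+$ in the encoded image is genuinely hit on $[A]^2$ for every unbounded $A$.
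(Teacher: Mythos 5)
The plan goes wrong at property~(4), in two places.

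First, you take $I=\id_p(\bar C,\bar I)$ and try to argue that this ideal itself is weakly $\theta$-saturated for some $\theta<\mu$, by first ruling out partitions of size $\mu^+$ and then ruling out partitions of size exactly $\mu$. The claimed equivalence between ``no partition of $\mu^+$ into $\mu$ disjoint $I$-positive sets'' and ``weakly $\theta$-saturated for some $\theta<\mu$'' is false as stated. One direction holds (if $I$ is weakly $\theta$-saturated for $\theta<\mu$, grouping shows there is no partition into $\mu$ pieces), but the other does not: it is entirely possible that for every $\theta<\mu$ there is a partition into $\theta$ disjoint positive pieces, and yet these partitions do not combine or refine into a single partition of size $\mu$. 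Nothing in the hypothesis forces them to cohere. The paper gets around exactly this subtlety by \emph{not} trying to prove that $\id_p(\bar C,\bar I)$ itself satisfies (4). Instead, after establishing weak $\mu$-saturation of $\id_p(\bar C,\bar I)$ (via the classical stepping-up fact that $\mu^+\rightarrow[\mu^+]^2_{\mu^+}$ implies $\mu^+\rightarrow[\mu^+]^2_\mu$, then Theorem~\ref{thm9}), it splits into two cases. If $\id_p(\bar C,\bar I)$ happens to be weakly $\cf(\mu)$-saturated, fine. Otherwise, fix disjoint positive sets $A_i$, $i<\cf(\mu)$. If \emph{every} $A_i$ could be split into $\theta$ positive pieces for \emph{every} $\theta<\mu$, then choosing $\theta_i$ increasing to $\mu$ and splitting $A_i$ into $\theta_i$ pieces would give $\mu$ disjoint positive sets, contradicting weak $\mu$-saturation. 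So some $A_i$ admits a $\theta<\mu$ for which it cannot be split, and the ideal $I:=\id_p(\bar C,\bar I)\restr A_i$ is the one with all four properties. Your proposal never considers this restriction, which is essential; there is no guarantee the unrestricted $\id_p(\bar C,\bar I)$ is weakly $\theta$-saturated for any $\theta<\mu$.

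Second, your final paragraph proposes to upgrade a $\mu$-coloring to a $\mu^+$-coloring by encoding sequences through the scale. Aside from being vague at the key point (how an encoding into $\prod_n\mu_n$ gives an ordinal of $\mu^+$ in a way that yields surjectivity onto all of $\mu^+$), this is a convoluted attempt to re-derive a standard elementary lemma: for successor cardinals, $\lambda\nrightarrow[\lambda]^2_{\lambda^-}$ already implies $\lambda\nrightarrow[\lambda]^2_\lambda$. Equivalently, $\mu^+\rightarrow[\mu^+]^2_{\mu^+}$ implies $\mu^+\rightarrow[\mu^+]^2_\mu$. The paper simply invokes this; there is no need to reconstruct it via scales, and the sketch you give does not do so correctly. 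Once you have weak $\mu$-saturation of $\id_p(\bar C,\bar I)$ from this elementary fact plus Theorem~\ref{thm9}, the restriction trick above finishes (4) cleanly.

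Your handling of properties (1)--(3) and the choice of a well-formed pair is fine, and the case split on $\cf(\mu)$ mirrors the paper, but the argument for (4) as written does not work.
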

\begin{proof}
Let $S$ be any stationary subset of $\{\delta<\mu^+:\cf(\delta)=\cf(\mu)\}$, and let $(\bar{C},\bar{I})$
be a well-formed (or nice in the case where $\cf(\mu)>\aleph_0$) $S$-club system. An elementary argument tells us that $\mu^+\rightarrow [\mu^+]^2_\mu$ must hold, and therefore the ideal $\id_p(\bar{C},\bar{I})$ is weakly $\mu$-saturated --- this follows from
Theorem~\ref{thm9} in the case where $\cf(\mu)=\aleph_0$, and Theorem~3 of~\cite{nsbpr} if $\cf(\mu)>\aleph_0$.  It is also routine to check (see Observation 3.2(1) on page~139 of~\cite{cardarith}) that $\id_p(\bar{C},\bar{I})$ satisfies conditions (1)-(3).

Now if $\id_p(\bar{C},\bar{I})$ happens to be weakly $\cf(\mu)$-saturated (a situation which might not even be consistent --- see Section~6 of~\cite{nsbpr}) then we are done. Otherwise, we can find a family $\{A_i:i<\cf(\mu)\}$ of  disjoint $\id_p(\bar{C},\bar{I})$-positive sets. Since $\id_p(\bar{C},\bar{I})$ is weakly $\mu$-saturated, there must exist an $i<\cf(\mu)$ and a $\theta<\mu$ such that $A_i$
cannot be partitioned into $\theta$ disjoint $\id_p(\bar{C},\bar{I})$-positive sets.  If we define
\begin{equation*}
I:=\id_p(\bar{C},\bar{I})\restr A_i := \{B\subs\mu^+: A_i\cap B\in \id_p(\bar{C},\bar{I})\},
\end{equation*}
then $I$ has all of the required properties.
\end{proof}

\bibliographystyle{plain}

\end{document}